 \numberwithin{equation}{section}
\theoremstyle{plain}
\newtheorem{thm}{Theorem}[section]
\newtheorem{cor}[thm]{Corollary}
\newtheorem{lem}[thm]{Lemma}
\newtheorem{prop}[thm]{Proposition}
\theoremstyle{definition}
\newtheorem{defn}[thm]{Definition}
\newtheorem{ex}[thm]{Example}
\newtheorem*{theorem*}{Theorem}
\newtheorem*{defn*}{Definition}
\theoremstyle{remark}
\newtheorem{rem}[thm]{Remark}
\newcommand{\N}{\mathbb{N}}
\newcommand{\R}{\mathbb{R}}
\newcommand{\bp}{\begin{proof}[\ensuremath{\mathbf{Proof}}]}
\newcommand{\bs}{\begin{proof}[\ensuremath{\mathbf{Solution}}]}
\newcommand{\ep}{\end{proof}}
\newcommand{\tr}{\operatorname{tr}}
\begin{document}

\title{Extremal functions for Morrey's inequality \\ in convex domains}

\author{Ryan Hynd\footnote{Department of Mathematics, MIT. Partially supported by NSF grants DMS-1301628 and DMS-1554130 and an MLK visiting professorship.}\; and Erik Lindgren\footnote{Department of Mathematics, Uppsala University. Supported by the Swedish Research Council, grant no. 2012-3124 and 2017-03736.}}  

\maketitle

\begin{abstract}
For a bounded domain $\Omega\subset \R^n$ and $p>n$, Morrey's inequality implies that there is $c>0$ such that 
$$
c\|u\|^p_{\infty}\le \int_\Omega|Du|^pdx
$$
for each $u$ belonging to the Sobolev space $W^{1,p}_0(\Omega)$.  We show that the ratio of any two extremal functions is constant provided that $\Omega$ is convex. We also show with concrete examples why this property fails to hold in general and verify that convexity is not a necessary condition for a domain to have this feature. As a by product, we obtain the uniqueness of an optimization problem involving the Green's function for the $p$-Laplacian.

\end{abstract}

\noindent {\bf   AMS classification:} 35J60, 35J70, 35P30, 39B62\\
\noindent {\bf Keywords:} Morrey's inequality; Nonlinear eigenvalue problem; $p$-Laplacian; Convexity.

\section{Introduction}
Suppose $\Omega\subset \R^n$ is a bounded domain and $p>n$.  Morrey's inequality for $W^{1,p}_0(\Omega)$ functions $u$ may be expressed as
$$
c\|u\|^p_{C^{1-\frac{n}{p}}(\overline\Omega)}\le \int_\Omega|Du|^pdx,
$$
where $c>0$ is a constant that is independent of $u$. In particular,  
\begin{equation}\label{WeakMorrey}
c\|u\|^p_{\infty}\le  \int_\Omega|Du|^pdx.
\end{equation}
Let us define
$$
\lambda_p:=\inf\left\{\raisebox{-0.5 em}{$\displaystyle\frac{\displaystyle \int_\Omega|Du|^pdx}{\displaystyle\| u\| _\infty^p}$}:u\in W_0^{1,p}(\Omega)\setminus\{0\}\right\}.
$$
Observe that
\begin{equation}\label{SecondMorrey}
\lambda_p\|u\|^p_{\infty}\le \int_\Omega|Du|^pdx
\end{equation}
and that $c=\lambda_p$ is the largest constant such that \eqref{WeakMorrey} is valid. Furthermore, if there is a function $u \in W^{1,p}_0(\Omega)\setminus\{0\}$ such that equality holds in \eqref{WeakMorrey}, then $c=\lambda_p$.
\begin{defn*}
A function $u\in W^{1,p}_0(\Omega)\setminus\{0\}$ is an {\it extremal} if equality holds in \eqref{SecondMorrey}. 
\end{defn*}

\par It is plain to see that any multiple of an extremal is also an extremal. Using routine compactness arguments, it is not difficult to verify that extremal functions exist. We will argue below that any extremal $u$ satisfies the boundary value problem 
\begin{equation}\label{pGroundStateMeasure}
\begin{cases}
-\Delta_pu=\lambda_p |u(x_0)|^{p-2}u(x_0)\delta_{x_0}\quad &x\in \Omega\\
\hspace{.34in} u=0 \quad &x\in \partial \Omega,
\end{cases}
\end{equation}
which was derived by Ercole and Pereira in \cite{ErcoleP}. Here $\Delta_p\psi:=\text{div}(|D\psi|^{p-2}D\psi)$ is the $p$-Laplacian, and $x_0$ is the {\it unique} point for which $|u|$ is maximized in $\Omega$.  Moreover, using \eqref{pGroundStateMeasure} we will be able to conclude that any extremal has a definite sign in $\Omega$. And as the PDE in \eqref{pGroundStateMeasure} is homogeneous, the optimal constant $\lambda_p$ can be interpreted as being an eigenvalue.

\par The primary goal of this work is to address the extent to which extremal functions can be different. In particular, we would like to know if any two extremal functions are necessarily multiples of one another. If they are, we consider the set of extremals to be uniquely determined. For once one extremal is found, all others can be obtained by scaling. We will argue that annuli never have this uniqueness property.  We will also exhibit star-shaped domains for which this uniqueness property fails. However, we will see that if a planar domain has certain symmetry, then its extremals are one dimensional.   

\par Our main result is that convex domains always have the aforementioned uniqueness property. 
\begin{thm}\label{mainThm}
Assume that $\Omega\subset \R^n$ is open, convex and bounded. If $u$ and $v$ are extremal, then $u/v$ is constant throughout $\Omega$. 
\end{thm}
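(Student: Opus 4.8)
The plan is to reduce Theorem~\ref{mainThm} to the uniqueness of a maximizer for a Green's-function functional, and then to establish that uniqueness using the convexity of $\Omega$ via a Brunn--Minkowski-type argument.

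\emph{Step 1: reduction to the Green's function.} For $y\in\Omega$ let $G_y\in W^{1,p}_0(\Omega)$ be the unique solution of $-\Delta_p G_y=\delta_y$ with $G_y=0$ on $\partial\Omega$, which exists and is unique since $p>n$ forces $\delta_y\in W^{-1,p'}(\Omega)$ and $-\Delta_p$ to be a homeomorphism onto the dual. As $G_y>0$ is $p$-harmonic in $\Omega\setminus\{y\}$ and vanishes on $\partial\Omega$, the strong maximum principle gives $\|G_y\|_\infty=G_y(y)=:\Phi(y)$, and testing the equation against $G_y$ gives $\int_\Omega|DG_y|^p\,dx=\Phi(y)$. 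Hence $u_y:=G_y/\Phi(y)$ satisfies $\|u_y\|_\infty=1$ and $\int_\Omega|Du_y|^p\,dx=\Phi(y)^{1-p}$, so \eqref{SecondMorrey} yields $\Phi(y)^{1-p}\ge\lambda_p$ for every $y$, with equality exactly when $u_y$ is extremal. Conversely, by \eqref{pGroundStateMeasure} any extremal $u$ (which we may take positive) solves $-\Delta_p u=\lambda_p\|u\|_\infty^{p-1}\delta_{x_0}$ with $x_0$ its unique maximum point; uniqueness for this datum and the $(p-1)$-homogeneity of $\Delta_p$ force $u=\lambda_p^{1/(p-1)}\|u\|_\infty\,G_{x_0}$, and evaluating at $x_0$ gives $\Phi(x_0)^{1-p}=\lambda_p$. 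Thus $\lambda_p=\min_{y\in\Omega}\Phi(y)^{1-p}$, and every extremal is a scalar multiple of $G_{y^\ast}$ for some maximizer $y^\ast$ of $\Phi$; so Theorem~\ref{mainThm} follows once $\Phi$ is shown to have a \emph{unique} maximizer.

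\emph{Step 2: uniqueness of the maximizer via convexity.} Write $E(y):=\Phi(y)^{1-p}=\min\{\int_\Omega|D\phi|^p\,dx:\phi\in W^{1,p}_0(\Omega),\ \phi(y)\ge1\}$, the $p$-capacity of the point $y$ relative to $\Omega$; since $\phi(y)=\phi(y)-\phi(z)$ for $z\in\partial\Omega$ together with Morrey's inequality gives $E(y)\to\infty$ as $y\to\partial\Omega$, it suffices to show $E$ has a unique interior minimum. Suppose $y_0\ne y_1$ both minimize $E$, with minimizers $\phi_0=u_{y_0}$, $\phi_1=u_{y_1}$, and set $y_t=(1-t)y_0+ty_1$. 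Using convexity of $\Omega$ I would form the Minkowski (sup-convolution) combination
\[
\psi(x):=\sup\Big\{(1-t)\phi_0(x_0)+t\phi_1(x_1):x_0,x_1\in\overline\Omega,\ (1-t)x_0+tx_1=x\Big\},
\]
which lies in $W^{1,p}_0(\Omega)$ (convexity confines $\psi$ to $\overline\Omega$ and forces $\psi=0$ on $\partial\Omega$) and satisfies $\psi(y_t)\ge(1-t)\phi_0(y_0)+t\phi_1(y_1)=1$, so $E(y_t)\le\int_\Omega|D\psi|^p\,dx$. The crux is the Brunn--Minkowski-type energy inequality
\[
\int_\Omega|D\psi|^p\,dx\ \le\ (1-t)\int_\Omega|D\phi_0|^p\,dx+t\int_\Omega|D\phi_1|^p\,dx\ =\ (1-t)E(y_0)+tE(y_1),
\]
which rests on the fact that at a point where the supremum is attained at an interior splitting the gradients agree, $D\psi(x)=D\phi_0(x_0)=D\phi_1(x_1)$, combined with a change of variables and the Pr\'ekopa--Leindler / Borell--Brascamp--Lieb mechanism. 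Since $E(y_t)\ge\lambda_p=(1-t)E(y_0)+tE(y_1)$, equality must hold throughout, and the equality case forces $\phi_0$ and $\phi_1$ to be translates of one another; as both are positive in $\Omega$ and vanish on $\partial\Omega$, the translation must be trivial, i.e. $y_0=y_1$. Hence $\Phi$ has a unique maximizer $y^\ast$.

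\emph{Conclusion and main obstacle.} Given two extremals $u,v$, replace them by $\pm u,\pm v$ so that they are positive (still extremal, by the sign property following from \eqref{pGroundStateMeasure}); by Step 1 each has maximum point $y^\ast$, so $u=\lambda_p^{1/(p-1)}\|u\|_\infty G_{y^\ast}$ and $v=\lambda_p^{1/(p-1)}\|v\|_\infty G_{y^\ast}$, whence $u/v$ is constant. The main obstacle is Step~2: proving the Brunn--Minkowski-type inequality for the $p$-Dirichlet energy of the Minkowski combination, pinning down its equality case, and justifying that the sup-convolution $\psi$ is an admissible competitor despite the merely $C^{1,\alpha}$ regularity of $p$-harmonic functions and the point singularities of the $\phi_i$ at their peaks. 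A leaner variant is to run the Minkowski-combination argument only at $t=\tfrac12$ on two putative maximizers of $\Phi$ and obtain the contradiction directly, without asserting convexity of $E$ on all of $\Omega$.
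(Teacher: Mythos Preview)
Your Step~1 is correct and is the content behind the paper's Corollary~\ref{Greenthm}; the paper, however, runs the implication the other way, proving Theorem~\ref{mainThm} first and deducing the Green's-function statement afterward. The identification of normalized extremals with $G_{y^\ast}/\Phi(y^\ast)$ at maximizers $y^\ast$ of $\Phi$ is exactly the structure the paper exploits.

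In Step~2 the Minkowski-combination strategy is indeed what the paper uses, but the specific energy inequality you assert is the real gap and does \emph{not} follow from Pr\'ekopa--Leindler or Borell--Brascamp--Lieb: those control $L^1$ norms of the combined function, not Dirichlet energies, and the gradient identity $D\psi(x)=D\phi_0(x_0)=D\phi_1(x_1)$ alone gives no handle on the Jacobian of $x\mapsto x_0(x)$ needed for any change of variables. The paper sidesteps this entirely. It uses the \emph{min}-type combination $u_\rho(z)=\sup\{\min\{u_0(x),u_1(y)\}:(1-\rho)x+\rho y=z\}$, whose superlevel sets are the Minkowski sums $(1-\rho)\{u_0>t\}+\rho\{u_1>t\}$, and instead of an energy inequality proves that $u_\rho$ is $p$-\emph{subharmonic} in $\Omega\setminus\{x_\rho\}$ via a pointwise Hessian inequality for the combination (Proposition~\ref{uRhoProp}). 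It then establishes the sharp Kichenassamy--V\'eron asymptotics $|Du_\rho(z)|\sim(\lambda_p/n\omega_n)^{1/(p-1)}|z-x_\rho|^{(p-n)/(p-1)-1}$ near the peak and integrates by parts on $\Omega\setminus B_r(x_\rho)$ to obtain $\int_\Omega|Du_\rho|^p\le\lambda_p$ directly; no convex combination of energies ever appears.

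The equality case is also more than ``translates.'' Once every $u_\rho$ is extremal, the paper passes to the support functions $h_i(\xi,t)$ of the convex level sets $\{u_i\ge t\}$, so that $h_\rho=(1-\rho)h_0+\rho h_1$ and all three satisfy the fully nonlinear PDE \eqref{plaph} of Colesanti--Salani. The equality analysis of that PDE forces $D^2_\xi h_0=C(t)D^2_\xi h_1$ and $\partial_t h_0=C(t)\partial_t h_1$ for some $C(t)>0$; the common boundary datum $h_0(\cdot,0)=h_1(\cdot,0)$ (both equal the support function of $\overline\Omega$) then pins $C\equiv1$ and kills any affine drift, giving $u_0\equiv u_1$. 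Your proposal supplies no mechanism for this rigidity step, and without it the argument would remain incomplete even if the energy inequality were granted.
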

\noindent We will also explain how Theorem \ref{mainThm} implies the following corollary involving the Green's function of the $p$-Laplacian in $\Omega$. 

\begin{cor}\label{Greenthm} Assume that $\Omega\subset \R^n$ is open, convex and bounded. Suppose that $G(\cdot,y)$ is the Green's function of the $p$-Laplacian in $\Omega$ with pole $y\in \Omega$; that is, $G(\cdot,y)$ satisfies 
\begin{equation}\label{GreenPDE}
\begin{cases}
-\Delta_p w=\delta_{y}\quad &x\in\Omega\\
\hspace{.34in}w=0\quad &x\in \partial\Omega.
\end{cases}
\end{equation}
Then $x_0$ given in equation \eqref{pGroundStateMeasure} is the unique point in $\Omega$ for which
$$
G(x_0,x_0)=\max\{G(y,y): y\in \Omega\}.
$$
\end{cor}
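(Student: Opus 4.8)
The plan is to identify the number $\lambda_p^{-1/(p-1)}$ simultaneously as the value $G(x_0,x_0)$ and as the supremum $\sup_{y}G(y,y)$, and then to invoke Theorem~\ref{mainThm} to pin down the maximizer. First I would normalize the extremal $u$ appearing in \eqref{pGroundStateMeasure} so that $u>0$ and $M:=u(x_0)=\|u\|_\infty$; then $-\Delta_p u=\lambda_p M^{p-1}\delta_{x_0}$ in $\Omega$ with $u=0$ on $\partial\Omega$. Since $\Delta_p$ is positively $(p-1)$-homogeneous and solutions of the Dirichlet problem for the $p$-Laplacian with a single Dirac mass on the right-hand side are unique, it follows that $u=(\lambda_p M^{p-1})^{1/(p-1)}G(\cdot,x_0)=\lambda_p^{1/(p-1)}M\,G(\cdot,x_0)$. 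Evaluating at $x=x_0$ and cancelling $M$ gives $G(x_0,x_0)=\lambda_p^{-1/(p-1)}$.

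Next, for an arbitrary pole $y\in\Omega$ I would record two facts about $G(\cdot,y)$. Because $p>n$, the fundamental solution of the $p$-Laplacian has a gradient that is $p$-integrable near its singularity, so $G(\cdot,y)\in W^{1,p}_0(\Omega)\setminus\{0\}$. Testing \eqref{GreenPDE} against $G(\cdot,y)$ itself—after a standard truncation near $y$ to justify the computation—yields $\int_\Omega|DG(\cdot,y)|^p\,dx=G(y,y)$. On the other hand, $G(\cdot,y)$ is positive, $p$-harmonic in $\Omega\setminus\{y\}$ and vanishes on $\partial\Omega$, so applying the maximum principle on $\Omega\setminus B_\varepsilon(y)$ and letting $\varepsilon\to0$ (using continuity at $y$) gives $\|G(\cdot,y)\|_\infty=G(y,y)$. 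Feeding $G(\cdot,y)$ into the Rayleigh quotient defining $\lambda_p$ then produces $\lambda_p\le G(y,y)/G(y,y)^p=G(y,y)^{1-p}$, i.e. $G(y,y)\le\lambda_p^{-1/(p-1)}$ for every $y\in\Omega$. Combined with the previous paragraph, this shows $\sup_{y\in\Omega}G(y,y)=\lambda_p^{-1/(p-1)}=G(x_0,x_0)$, so $x_0$ is indeed a maximizer.

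For uniqueness, suppose $G(y,y)=\lambda_p^{-1/(p-1)}$ for some $y\in\Omega$. Then the chain of inequalities in the previous step is saturated, so $G(\cdot,y)$ realizes the infimum $\lambda_p$ and is therefore an extremal. By Theorem~\ref{mainThm}, $G(\cdot,y)/u$ is a positive constant, hence $G(\cdot,y)=c\,G(\cdot,x_0)$ for some $c>0$ by the formula for $u$ above. Applying $-\Delta_p$ to both sides gives $\delta_y=c^{p-1}\delta_{x_0}$ as measures, which forces $y=x_0$. This proves that $x_0$ is the unique point at which $G(y,y)$ attains its supremum.

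The main obstacle I anticipate is the bookkeeping in the second paragraph: verifying carefully that $G(\cdot,y)\in W^{1,p}_0(\Omega)$, that the self-testing identity $\int_\Omega|DG(\cdot,y)|^p\,dx=G(y,y)$ is legitimate despite the singularity at the pole, and that $\|G(\cdot,y)\|_\infty=G(y,y)$—all of which rest on the (standard, but needing citation) regularity and comparison theory for the $p$-Laplacian with measure data. Once those are established, the remainder is soft: homogeneity plus uniqueness of $p$-Laplacian solutions for the identification in the first paragraph, and a direct application of Theorem~\ref{mainThm} for the uniqueness claim.
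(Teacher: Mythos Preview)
Your proposal is correct and follows essentially the same route as the paper. The paper compresses your second paragraph into the single chain
\[
G(y,y)=\int_\Omega |DG(x,y)|^p\,dx\ge \lambda_p\|G(\cdot,y)\|_\infty^p\ge \lambda_p G(y,y)^p,
\]
using only the trivial bound $\|G(\cdot,y)\|_\infty\ge G(y,y)$ rather than your (correct but unnecessary) equality, and then reads off both the inequality $G(y,y)\le\lambda_p^{-1/(p-1)}$ and the characterization of equality at once; your explicit identification $u=\lambda_p^{1/(p-1)}M\,G(\cdot,x_0)$ and your uniqueness step via $\delta_y=c^{p-1}\delta_{x_0}$ are just more explicit renderings of what the paper leaves implicit in invoking Theorem~\ref{mainThm}.
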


\par Part of our motivation was to extend a previous result of Talenti. He considered extremal functions for the following inequality, which is also due to Morrey. For each weakly differentiable function $u:\R^n\rightarrow \R$,
\begin{equation}\label{MorreyonWholeSpace}
\|u\|^p_{L^\infty(\R^n)}
\le c(n,p)\left|\text{supp}(u)\right|^{\frac{p}{n}-1}\int_{\R^n}|Du|^pdx.
\end{equation}
Here $c(n,p)$ is an explicit constant depending only on $p$ and $n$, and $|\text{supp}(u)|$ is the Lebesgue measure of the support of $u$. Employing Schwarz symmetrization, Talenti showed in \cite{Talenti} that if equality holds in \eqref{MorreyonWholeSpace} there are $a\in \R$, $r>0$ and $x_0\in \R^n$ such that
$$
u(x)=
\begin{cases}
a\left(r^{\frac{p-n}{p-1}}-|x-x_0|^{\frac{p-n}{p-1}}\right), \quad & |x-x_0|< r\\
0, & |x-x_0|\ge r.
\end{cases}
$$
We remark that a quantitative version of this result has been established by Cianchi \cite{Cianchi}, and we refer the reader to \cite{Cianchi89},  \cite{EF}, and \cite{Tal87} for work on sharp constants of related inequalities.

\par Unfortunately, $\R^n$ and balls are the only known domains for which the extremals have such convenient characterizations. Nevertheless, in this paper, we believe that we have taken significant steps in understanding precisely which domains have a one dimensional collection of extremals. In Section \ref{Prelim}, we will derive basic properties of solutions of \eqref{pGroundStateMeasure}, and in Section \ref{SuppSec}, we consider the support function of an extremal.  In Section \ref{CounterEx}, we will provide examples of domains for which uniqueness fails; these include annuli, bow tie and dumbbell shaped planar domains.  In Section \ref{UniquenessArgs}, we verify Theorem \ref{mainThm}, and in Section \ref{Steiner}, we use Steiner symmetrization to exhibit some nonconvex planar domains that have unique extremals.

\section{Properties of extremals}\label{Prelim}
We now proceed to deriving some properties of extremal functions. These properties will be crucial to our uniqueness study. First, we verify that extremal functions satisfy the boundary value problem \eqref{pGroundStateMeasure}. Then we will study the behavior of solutions of \eqref{pGroundStateMeasure} near their global maximum or minimum points. We also refer the reader to the  recent paper \cite{ErcoleP} by 
Ercole and Pereira, where they studied properties of extremal functions in a wide class of inequalities that include \eqref{WeakMorrey}. In particular, they obtained analogous results to Corollary \ref{sign} and Corollary \ref{sol} below.
 
\begin{lem}
A function $u\in W_0^{1,p}(\Omega)$ is extremal if and only if  
\begin{equation}\label{WeakMeasure}
\int_\Omega|Du|^{p-2}Du\cdot D\phi dx=\lambda_p\max\left\{|u(x)|^{p-2}u(x)\phi(x): x\in \overline{\Omega}, |u(x)|=\|u\|_\infty\right\}
\end{equation}
for all $\phi\in W_0^{1,p}(\Omega)$.
\end{lem}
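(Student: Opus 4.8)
I would read the statement as the Euler--Lagrange characterization of the minimization problem defining $\lambda_p$, the only subtlety being that $w\mapsto\|w\|_\infty$ is merely one-sidedly differentiable. The standing observation is that, since $p>n$, $W^{1,p}_0(\Omega)$ embeds into $C(\overline\Omega)$, so every $u\in W^{1,p}_0(\Omega)\setminus\{0\}$ is continuous with $\|u\|_\infty>0$, and the contact set $K:=\{x\in\overline\Omega:|u(x)|=\|u\|_\infty\}$ is nonempty and compact; moreover $K\subset\Omega$ since $u$ vanishes on $\partial\Omega$ while $\|u\|_\infty>0$. I will use these facts freely.

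For the implication ``extremal $\Rightarrow$ \eqref{WeakMeasure}'', the plan is to observe that an extremal $u$ is a global minimizer of the functional
\[
J(w):=\int_\Omega|Dw|^p\,dx-\lambda_p\|w\|_\infty^p,\qquad w\in W^{1,p}_0(\Omega).
\]
Indeed $J\ge 0$ everywhere by the definition of $\lambda_p$ (this is exactly \eqref{SecondMorrey}), while $J(u)=0$ by extremality. Fixing $\phi\in W^{1,p}_0(\Omega)$ and setting $g(t):=J(u+t\phi)$, the scalar function $g$ is minimized at $t=0$, so $g'(0^+)\ge0\ge g'(0^-)$. The Dirichlet part of $g$ is differentiable in $t$ with derivative $p\int_\Omega|Du|^{p-2}Du\cdot D\phi\,dx$ at $t=0$, so everything reduces to the one-sided derivatives of $h(t):=\|u+t\phi\|_\infty$. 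I would establish (see the last paragraph) that, with $m:=\|u\|_\infty$,
\[
h'(0^+)=\max_{x\in K}\operatorname{sgn}(u(x))\phi(x),\qquad h'(0^-)=\min_{x\in K}\operatorname{sgn}(u(x))\phi(x),
\]
and hence $(h^p)'(0^\pm)=p\,m^{p-1}h'(0^\pm)$ because $m>0$. Plugging these in, $g'(0^+)\ge0$ reads $\int_\Omega|Du|^{p-2}Du\cdot D\phi\,dx\ge\lambda_p\,m^{p-1}\max_{x\in K}\operatorname{sgn}(u(x))\phi(x)$, while $g'(0^-)\le0$ gives the same with $\min$ in place of $\max$; since $\min\le\max$, all these inequalities must be equalities. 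As $m^{p-1}\operatorname{sgn}(u(x))=|u(x)|^{p-2}u(x)$ on $K$, this is exactly \eqref{WeakMeasure}. (Comparing the identities for $\phi$ and $-\phi$ one also sees that $x\mapsto|u(x)|^{p-2}u(x)\phi(x)$ is constant on $K$ for every $\phi$, forcing $K$ to be the single point $x_0$ of \eqref{pGroundStateMeasure}; this is not needed here but recovers the normalization in \eqref{pGroundStateMeasure}.)

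The converse is immediate: assuming \eqref{WeakMeasure} for all $\phi$, take $\phi=u$. The left side becomes $\int_\Omega|Du|^p\,dx$, and on $K$ we have $|u(x)|^{p-2}u(x)\cdot u(x)=|u(x)|^p=\|u\|_\infty^p$, so the right side is $\lambda_p\|u\|_\infty^p$. Thus equality holds in \eqref{SecondMorrey}, i.e. $u$ (which is nonzero) is extremal.

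The only genuine work --- and the step I expect to be the main obstacle --- is the one-sided derivative formula for $t\mapsto\|u+t\phi\|_\infty$; this is essentially the computation of the subdifferential of the sup-norm and is also carried out in \cite{ErcoleP}. The lower bound $h'(0^+)\ge\max_K\operatorname{sgn}(u)\phi$ (and symmetrically for $0^-$) is easy: evaluate $h$ at a point of $K$ realizing the extremum and use that $|u|=m>0$ there, so $u+t\phi$ keeps the sign of $u$ for small $t$. For the matching upper bound I would argue by compactness and contradiction: if $h(t_j)>m+t_j\max_K\operatorname{sgn}(u)\phi+\varepsilon t_j$ along some $t_j\downarrow0$, pick maximizers $x_j$ of $|u+t_j\phi|$ with $x_j\to\bar x$; the crude estimate $|u(x_j)+t_j\phi(x_j)|\le|u(x_j)|+t_j\|\phi\|_\infty\le m+t_j\|\phi\|_\infty$ forces $|u(x_j)|\to m$, so $\bar x\in K$, and then continuity of $u$ and $\phi$ near $\bar x$ (where $u$ is bounded away from $0$) yields $|u(x_j)+t_j\phi(x_j)|\le m+t_j\operatorname{sgn}(u(\bar x))\phi(\bar x)+o(t_j)\le m+t_j\max_K\operatorname{sgn}(u)\phi+o(t_j)$, contradicting the choice of $x_j$. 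I would record this as a short preliminary claim before the proof of the lemma.
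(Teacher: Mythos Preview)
Your proof is correct and follows essentially the same approach as the paper: both hinge on computing the one-sided derivative of $t\mapsto\|u+t\phi\|_\infty$ (or its $p$-th power) via the same compactness argument on maximizers, and both exploit minimality at $t=0$ to obtain the Euler--Lagrange identity. The only cosmetic differences are that the paper differentiates the Rayleigh quotient and uses the substitution $\phi\mapsto-\phi$ for the reverse inequality, whereas you minimize the difference functional $J$ and use both one-sided derivatives $g'(0^+)\ge0\ge g'(0^-)$ directly; these are equivalent maneuvers.
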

\begin{proof}
1. First let us establish the following identity 
\begin{equation}\label{supnormderiv}
\lim_{\epsilon\rightarrow0^+}\frac{\|u+\epsilon\phi \|^p_\infty-\|u\|^p_\infty}{\epsilon}=p\max\left\{|u(x)|^{p-2}u(x)\phi(x): x\in \overline{\Omega}, |u(x)|=\|u\|_\infty\right\}
\end{equation}
for $u,\phi\in C(\overline\Omega)$.  For any $x_0$ such that $|u(x_0)|=\|u\|_\infty$,
\begin{align*}
\frac{1}{p}\|u+\epsilon\phi\|_\infty^p&\ge\frac{1}{p}|u(x_0)+\epsilon\phi(x_0)|^p\\
&\ge \frac{1}{p}|u(x_0)|^p+\epsilon|u(x_0)|^{p-2}u(x_0)\phi(x_0)\\
&= \frac{1}{p}\|u\|_\infty^p+\epsilon|u(x_0)|^{p-2}u(x_0)\phi(x_0).
\end{align*}
Therefore, 
$$
\liminf_{\epsilon\rightarrow0^+}\frac{\|u+\epsilon\phi \|^p_\infty-\|u\|^p_\infty}{\epsilon}\ge p|u(x_0)|^{p-2}u(x_0)\phi(x_0)
$$
and so $``\ge"$ holds in \eqref{supnormderiv}. 
\par Now choose a sequence of positive numbers $(\epsilon_j)_{j\in \N}$ tending to $0$ such that 
$$
\limsup_{\epsilon\rightarrow0^+}\frac{\|u+\epsilon\phi\|_\infty^p-\|u\|_\infty^p}{\epsilon}=\limsup_{j\rightarrow\infty}\frac{\|u+\epsilon_j\phi\|_\infty^p-\|u\|_\infty^p}{\epsilon_j}
$$
and select a sequence $(x_{j})_{j\in \N}$ maximizing $|u+\epsilon_j \phi|$ that converges to a maximizer $x_0$ of $|u|$. Such sequences exist by the continuity of $u$ and $\phi$, the compactness of $\overline\Omega$, and the inequalities $|u(x)+\epsilon_j \phi(x)|\le \|u+\epsilon_j \phi\|_\infty=|u(x_j)+\epsilon_j\phi(x_j)|$. As $\R\ni z\mapsto \frac{1}{p}|z|^p$ is continuously differentiable,
\begin{align*}
\limsup_{j\rightarrow\infty}\frac{\|u+\epsilon_j\phi\|_\infty^p-\|u\|_\infty^p}{\epsilon_j}&\le \limsup_{j\rightarrow\infty}\frac{|u(x_{j})+\epsilon_j\phi(x_{j})|^p-|u(x_{j})|^p}{\epsilon_j}\\
&=p|u(x_0)|^{p-2}u(x_0)\phi(x_0)\\
&\le p\max\{|u(x)|^{p-2}u(x)\phi(x) : |u(x)|=\|u\|_\infty \}.
\end{align*}
We conclude $``\le"$ in \eqref{supnormderiv}. 
\par 2. Any extremal $u\in W^{1,p}_0(\Omega)\setminus\{0\}$ satisfies 
$$
\lambda_p=\frac{\displaystyle\int_\Omega|Du|^pdx}{\| u\| _\infty^p}\le \frac{\displaystyle\int_\Omega|Du+\epsilon D\phi|^pdx}{\| u+\epsilon\phi \| _\infty^p}
$$
for each $\phi\in W^{1,p}_0(\Omega)$ and $\epsilon>0$ sufficiently small. Exploiting \eqref{supnormderiv}
\begin{align*}
0&\le \lim_{\epsilon\rightarrow0^+}\frac{1}{p\epsilon}\left(\frac{\displaystyle\int_\Omega|Du+\epsilon D\phi|^pdx}{\| u+\epsilon\phi \| _\infty^p}-\frac{\displaystyle\int_\Omega|Du|^pdx}{\| u\| _\infty^p}\right)\\
&= \frac{\displaystyle\int_\Omega|Du|^{p-2}Du\cdot D\phi dx}{\| u\| _\infty^p}-\frac{\displaystyle\int_\Omega|Du|^pdx}{\| u\| _\infty^{2p}}\max\{|u(x)|^{p-2}u(x)\phi(x): x\in \overline{\Omega}, |u(x)|=\|u\|_\infty\}\\
&=\frac{1}{\| u\| _\infty^p}\left(\int_\Omega|Du|^{p-2}Du\cdot D\phi dx-\lambda_p\max\{|u(x)|^{p-2}u(x)\phi(x): x\in \overline{\Omega}, |u(x)|=\|u\|_\infty\}\right).
\end{align*}
Canceling the factor $1/\| u\| _\infty^p$ and replacing $\phi$ with $-\phi$ gives \eqref{WeakMeasure}. 
\par 3. Of course if \eqref{WeakMeasure} holds, we can choose $\phi=u$ to verify that $u$ is extremal. 
\end{proof}

\begin{cor}\label{sign}
Each extremal function is everywhere positive or everywhere negative in $\Omega$. 
\end{cor}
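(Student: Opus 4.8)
The plan is to combine the weak formulation \eqref{WeakMeasure} with the strong minimum principle for nonnegative weak supersolutions of the $p$-Laplace equation. First I would observe that if $u$ is extremal, then so is $|u|$: we have $|u|\in W^{1,p}_0(\Omega)$, $\||u|\|_\infty=\|u\|_\infty$, and $|D|u||=|Du|$ almost everywhere (since $Du=0$ a.e. on $\{u=0\}$), so $|u|$ realizes the same value $\lambda_p$ of the Rayleigh quotient. By the lemma, $|u|$ therefore satisfies \eqref{WeakMeasure}, and because $|u(x)|=\|u\|_\infty$ on the maximizing set this identity reads
\[
\int_\Omega|D|u||^{p-2}D|u|\cdot D\phi\,dx=\lambda_p\|u\|_\infty^{p-1}\max\bigl\{\phi(x):x\in\overline\Omega,\ |u(x)|=\|u\|_\infty\bigr\}
\]
for every $\phi\in W^{1,p}_0(\Omega)$.

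Next I would read off from this identity that $|u|$ is a nonnegative weak supersolution of $-\Delta_p w=0$ in $\Omega$: taking $\phi\ge0$, the right-hand side is nonnegative, so $\int_\Omega|D|u||^{p-2}D|u|\cdot D\phi\,dx\ge0$ for all such $\phi$, i.e.\ $-\Delta_p|u|\ge0$ weakly in $\Omega$. Moreover $|u|\not\equiv0$ since $u\ne0$, and, because $p>n$, Morrey's embedding ensures that $u$, hence $|u|$, is continuous on $\Omega$.

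The final step is to invoke the strong minimum principle (equivalently, the weak Harnack inequality) for nonnegative weak supersolutions of the $p$-Laplacian on the connected open set $\Omega$: such a function either is strictly positive everywhere or vanishes identically. Since $|u|\not\equiv0$, this forces $|u|>0$ throughout $\Omega$, so $u$ never vanishes in $\Omega$; being continuous on the connected set $\Omega$, it must be everywhere positive or everywhere negative. I do not expect a real obstacle here: the only points needing care are verifying that $|u|$ inherits extremality and citing the correct form of the strong maximum principle for the $p$-Laplacian (e.g.\ Vázquez's version or Trudinger's Harnack inequality), after which the conclusion is immediate.
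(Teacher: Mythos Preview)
Your argument is correct and follows essentially the same route as the paper: pass to $|u|$, use the weak formulation \eqref{WeakMeasure} with nonnegative test functions to see that $|u|$ is a nonnegative $p$-supersolution, and then apply the strong minimum principle (the paper cites Pucci--Serrin) to conclude $|u|>0$, hence $u$ has a definite sign.
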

\begin{proof}
Assume that $u\in W^{1,p}_0(\Omega)\setminus\{0\}$ is extremal. Then $w:=|u|$ is extremal, as well. Moreover, \eqref{WeakMeasure} implies 
$$
\int_\Omega|Dw|^{p-2}Dw\cdot D\phi dx\ge 0
$$
for all $\phi\ge 0$.  Therefore, $w$ is $p$-superharmonic, $w\ge 0$ and $w|_{\partial\Omega}=0$. Since $w$ doesn't vanish identically, $w=|u|>0$ 
(Theorem 11.1 in \cite{PucSer}). Hence, 
$u$ doesn't vanish in $\Omega$ and so $u$ has a definite sign in $\Omega$. 
\end{proof}
Observe that the left hand side of \eqref{WeakMeasure} is linear in $\phi$, while the right hand side appears to be nonlinear in $\phi$.  We will argue that this forces 
the set $\{x\in \overline{\Omega}: |u(x)|=\|u\|_\infty\}$ to be a singleton for any extremal function.

\begin{prop}
Assume $u$ is an extremal function. Then $\{x\in \Omega: |u(x)|=\|u\|_\infty\}$ is a singleton. 
\end{prop}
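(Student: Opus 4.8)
The plan is to exploit exactly the tension the authors highlight: in \eqref{WeakMeasure} the left‑hand side is linear in $\phi$, whereas the right‑hand side is a priori only positively homogeneous and convex, being a maximum of linear functionals of $\phi$. Testing \eqref{WeakMeasure} against both $\phi$ and $-\phi$ should force that right‑hand side to be genuinely linear, and tracing through what linearity means will collapse the maximizer set to a point.

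First I would normalize. By the preceding corollary we may assume $u>0$ in $\Omega$, so on $M:=\{x\in\overline\Omega:|u(x)|=\|u\|_\infty\}$ we have $|u(x)|^{p-2}u(x)\equiv\|u\|_\infty^{p-1}$. Since $p>n$, Morrey's embedding makes $u$, and indeed every element of $W^{1,p}_0(\Omega)$, continuous on $\overline\Omega$; and since $u\not\equiv 0$ vanishes on $\partial\Omega$, the set $M$ is a compact subset of the open set $\Omega$. Setting $c:=\lambda_p\|u\|_\infty^{p-1}>0$, the identity \eqref{WeakMeasure} becomes
$$L(\phi):=\int_\Omega|Du|^{p-2}Du\cdot D\phi\,dx=c\max_{x\in M}\phi(x),\qquad \phi\in W^{1,p}_0(\Omega),$$
where I would stress that the maximum is over the fixed set $M$ (not over the $\phi$‑dependent set of maximizers of $|u+\epsilon\phi|$), which is what makes this a clean functional identity.

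Now apply this to $-\phi$: since $L$ is linear, $-L(\phi)=L(-\phi)=c\max_{x\in M}(-\phi(x))=-c\min_{x\in M}\phi(x)$, hence $L(\phi)=c\min_{x\in M}\phi(x)$ as well. Comparing the two expressions and dividing by $c>0$ yields $\max_{x\in M}\phi(x)=\min_{x\in M}\phi(x)$ for every $\phi\in W^{1,p}_0(\Omega)$; that is, every such $\phi$ is constant on $M$. If $M$ contained two distinct points $x_1\neq x_2$, both in the open set $\Omega$, I would choose $\phi\in C^\infty_c(\Omega)\subset W^{1,p}_0(\Omega)$ with $\phi(x_1)=0$ and $\phi(x_2)=1$, a contradiction. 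Therefore $M$, and hence also $\{x\in\Omega:|u(x)|=\|u\|_\infty\}$, is a singleton.

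I do not expect a serious obstacle here; the argument is short. The only points needing care are (i) recording that $W^{1,p}_0(\Omega)$ functions are continuous and that $M\subset\Omega$, so that pointwise evaluation and the separating bump function make sense, and (ii) resisting the temptation to assume the right‑hand side of \eqref{WeakMeasure} is linear in $\phi$ (which is essentially the conclusion), and instead deriving the $\max=\min$ dichotomy on $M$ from the $\pm\phi$ substitution.
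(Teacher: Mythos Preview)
Your argument is correct and follows essentially the same idea as the paper's proof: both exploit that the left side of \eqref{WeakMeasure} is linear in $\phi$ to force the max functional on the right to be linear, and then reach a contradiction via a suitable bump function. The only cosmetic difference is that the paper tests additivity with two disjointly supported bumps to obtain $1=2$, whereas you test $\pm\phi$ to conclude $\max_M\phi=\min_M\phi$; both routes are equally short and valid.
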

\begin{proof}
Without any loss of generality, we may assume $u>0$ and $\|u\|_\infty=1$. In view of \eqref{WeakMeasure},
\begin{equation}\label{linearRelation}
\max_{\{u=1\}}\{\phi_1+\phi_2\}=\max_{\{u=1\}}\phi_1+\max_{\{u=1\}}\phi_2
\end{equation}
for any two $\phi_1,\phi_2\in C^\infty_c(\Omega)$.  Suppose that there are distinct points $x_1$ and $x_2$ for which $u(x_1)=u(x_2)=1$. In this case, 
there are balls $B_\delta(x_1), B_\delta(x_2)\subset\Omega$ that are disjoint for some $\delta>0$ small enough. We choose functions $\phi_1\in C^\infty_c(\Omega)$, $\phi_2\in C^\infty_c(\Omega)$ that 
are nonnegative, have maximum value 1, and are supported in $B_\delta(x_1)$ and $B_\delta(x_2)$, respectively. It follows that
$$
\max_{\{u=1\}}\{\phi_1+\phi_2\}=1,
$$
while $\max_{\{u=1\}}\phi_1=1$ and $\max_{\{u=1\}}\phi_2=1$. This contradicts \eqref{linearRelation}.
\end{proof}

\begin{cor}\label{sol}
Assume $u$ is an extremal function. Then $|u|$ attains its maximum value uniquely at some $x_0\in\Omega$. Moreover, 
$$
\int_\Omega|Du|^{p-2}Du\cdot D\phi dx=\lambda_p |u(x_0)|^{p-2}u(x_0)\phi(x_0)
$$
for each $\phi\in W^{1,p}_0(\Omega)$. In particular, $u$ is a weak solution of \eqref{pGroundStateMeasure}.
\end{cor}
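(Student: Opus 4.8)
The plan is to read this off directly from the Proposition and the identity \eqref{WeakMeasure}, with only one small topological observation needed. First I would record that, since $p>n$, Morrey's inequality gives the continuous embedding $W^{1,p}_0(\Omega)\hookrightarrow C(\overline{\Omega})$, so the extremal $u$ has a continuous representative on $\overline{\Omega}$ that vanishes on $\partial\Omega$. Because $u\not\equiv 0$ we have $\|u\|_\infty>0$, hence $|u|$ cannot attain its maximum on $\partial\Omega$; therefore
$$
\{x\in\overline{\Omega}:|u(x)|=\|u\|_\infty\}=\{x\in\Omega:|u(x)|=\|u\|_\infty\},
$$
and by the Proposition this set is a singleton $\{x_0\}$ with $x_0\in\Omega$. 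This proves the first assertion.

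Next I would substitute this information into \eqref{WeakMeasure}. Since the maximum appearing there is taken over the single point $x_0$, the right-hand side collapses to $\lambda_p|u(x_0)|^{p-2}u(x_0)\phi(x_0)$, so that
$$
\int_\Omega|Du|^{p-2}Du\cdot D\phi\,dx=\lambda_p|u(x_0)|^{p-2}u(x_0)\phi(x_0)
$$
for every $\phi\in W^{1,p}_0(\Omega)$, where the evaluation $\phi\mapsto\phi(x_0)$ again makes sense by the Morrey embedding. Finally, to conclude that $u$ is a weak solution of \eqref{pGroundStateMeasure}, I would note that the displayed identity, tested against $\phi\in C^\infty_c(\Omega)$ and using $\langle\delta_{x_0},\phi\rangle=\phi(x_0)$, is precisely the distributional equation $-\Delta_p u=\lambda_p|u(x_0)|^{p-2}u(x_0)\delta_{x_0}$ in $\Omega$, while the boundary condition $u=0$ on $\partial\Omega$ is encoded in the requirement $u\in W^{1,p}_0(\Omega)$.

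I do not expect any genuine obstacle: the corollary is a formal consequence of the Proposition together with the Lemma. The only point deserving a word of care is verifying that the maximum of $|u|$ is attained inside the open set $\Omega$ and not on its boundary, and this follows at once from $u\in W^{1,p}_0(\Omega)$ and the continuity of $u$ up to $\overline{\Omega}$ guaranteed by $p>n$.
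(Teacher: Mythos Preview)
Your proposal is correct and matches the paper's approach: the corollary is stated without proof, as an immediate consequence of the preceding Proposition (the maximizing set is a singleton) and the Lemma (identity \eqref{WeakMeasure}). Your added remark that the maximum must lie in the open set $\Omega$ rather than on $\partial\Omega$, because $u$ is continuous on $\overline{\Omega}$ and vanishes on the boundary, is exactly the small topological point one needs to make the transition from the Proposition's statement about $\{x\in\Omega:|u(x)|=\|u\|_\infty\}$ to the corollary.
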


\par We note that any solution $u$ of \eqref{pGroundStateMeasure}  is differentiable with a locally H\"older continuous gradient in $\Omega\setminus\{x_0\}$, see \cite{Evans, Lewis1, Ural}.   However, we show below that $u$ is not differentiable at $x_0$. 

\begin{ex}\label{TalentiExample}
As we noted above, when $\Omega=B_r(x_0)$, we have an explicit extremal function 
\begin{equation}\label{TalentiLoc}
u(x)=a\left(r^{\frac{p-n}{p-1}}-|x-x_0|^{\frac{p-n}{p-1}}\right), \quad x\in B_r(x_0)
\end{equation}
for each $a\in \R$. Moreover, any extremal is of the form \eqref{TalentiLoc} for some $a\in \R$; in particular, any ball has a one dimensional collection of extremal functions. The corresponding optimal constant in \eqref{SecondMorrey} is
$$
\lambda_p=\displaystyle\frac{\displaystyle\int_{B_r(x_0)}|Du|^pdx}{\displaystyle\|u\|^p_\infty} =\left(\frac{p-n}{p-1}\right)^{p-1}r^{n-p}n\omega_n.
$$
\end{ex}
\begin{figure}[h]
\begin{center}
\includegraphics[width=.8\textwidth]{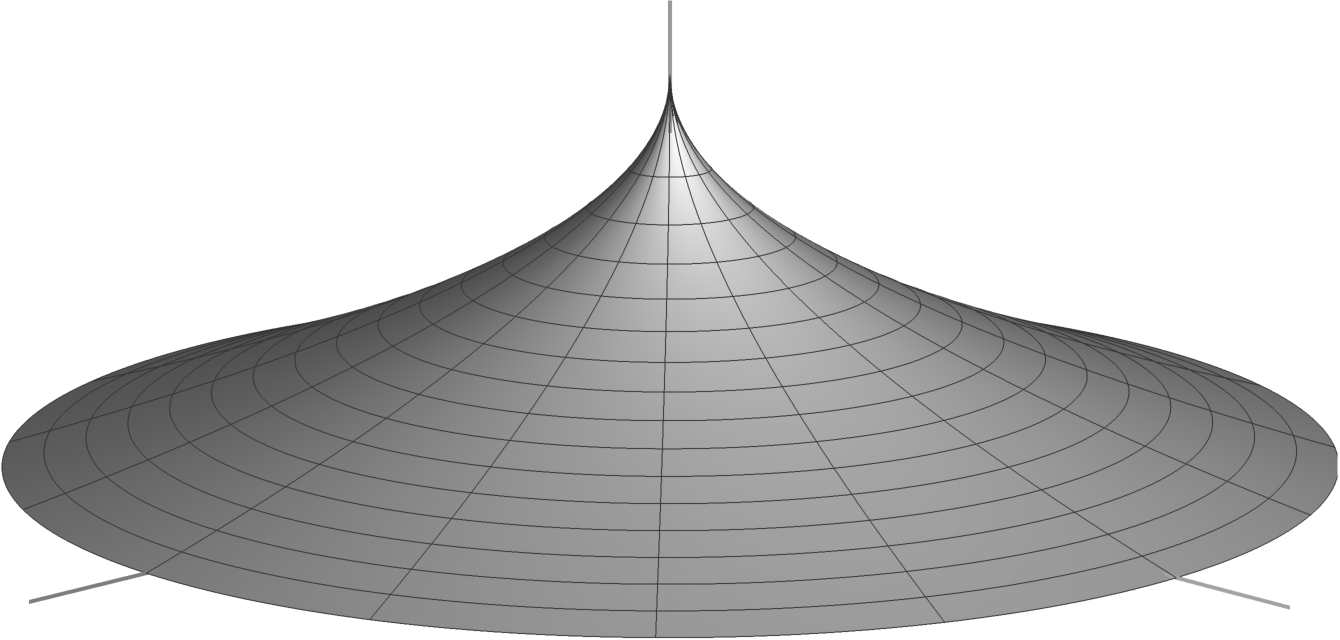}
\caption{Example \ref{TalentiExample} with $a=1$, $p=4$, $n=2$, $r=1$ and $x_0=0$.}
\end{center}
\end{figure}

\par We can use the extremals for balls \eqref{TalentiLoc} to study the behavior of general extremals near the points which maximize their absolute values. Note in particular, that the family of extremals \eqref{TalentiLoc} are H\"{o}lder continuous with exponent $\frac{p-n}{p-1}\in (0,1]$, which is a slight improvement of the exponent $\frac{p-n}{p}$ one has from the Sobolev embedding $W^{1,p}_0(\Omega)\subset C^{1-\frac{n}{p}}(\overline{\Omega})$.  We will first argue that solutions of \eqref{pGroundStateMeasure}, and in particular extremals, have exactly this type of continuity at their maximizing or minimizing points. 

\begin{prop}\label{holderprop}
Assume $u$ is a solution of \eqref{pGroundStateMeasure} and that $B_r(x_0)\subset \Omega\subset B_R(x_0)$. Then 
\begin{equation}\label{HolderEst}
\frac{\|u\|_\infty}{R^{\frac{p-n}{p-1}}}|x-x_0|^\frac{p-n}{p-1}\le |u(x)-u(x_0)|\le \frac{\|u\|_\infty}{r^{\frac{p-n}{p-1}}}|x-x_0|^\frac{p-n}{p-1}
\end{equation}
for each $x\in \Omega.$
\end{prop} 
\begin{proof}
Without loss of generality we may assume that $u>0$ and $u(x_0)=1$. Define 
\begin{align*}
v(x):&=\frac{1}{r^{\frac{p-n}{p-1}}}\left(r^{\frac{p-n}{p-1}}-|x-x_0|^{\frac{p-n}{p-1}}\right)\\
&=1-\frac{1}{r^{\frac{p-n}{p-1}}}|x-x_0|^{\frac{p-n}{p-1}}
\end{align*}
for $x\in B_r(x_0).$  Observe that $u$ and $v$ are $p$-harmonic in $B_r(x_0)\setminus\{x_0\}$, $u(x_0)=v(x_0)$ and $u\ge v$ on $\partial B_r(x_0)$. 
By weak comparison, $u\ge v$ in  $B_r(x_0)$. That is
$$
u(x)\ge 1-\frac{1}{r^{\frac{p-n}{p-1}}}|x-x_0|^{\frac{p-n}{p-1}}, \quad x\in B_r(x_0).
$$
Since $v(x)\le 0$ for $x\not\in B_r(x_0)$, the above inequality trivially holds for $x\in\Omega\setminus B_r(x_0)$. 

\par Now set 
\begin{align*}
w(x):&=\frac{1}{R^{\frac{p-n}{p-1}}}\left(R^{\frac{p-n}{p-1}}-|x-x_0|^{\frac{p-n}{p-1}}\right)\\
&=1-\frac{1}{R^{\frac{p-n}{p-1}}}|x-x_0|^{\frac{p-n}{p-1}}.
\end{align*}
Observe that $u$ and $w$ are $p$-harmonic in $\Omega\setminus\{x_0\}$, $u(x_0)=w(x_0)$ and $u\le w$ on $\partial\Omega$ as $\Omega\subset B_R(x_0)$. 
By weak comparison, $u\le w$ in  $\Omega$. That is
$$
u(x)\le 1-\frac{1}{R^{\frac{p-n}{p-1}}}|x-x_0|^{\frac{p-n}{p-1}}, \quad x\in \Omega.
$$
\end{proof}
\begin{cor}
Suppose that $u$ is a non-zero solution of \eqref{pGroundStateMeasure}. Then $u$ is not differentiable 
at $x_0$. 
\end{cor}
\begin{proof} 
First assume $\frac{p-n}{p-1}\in (0,1)$.  By hypothesis, $u(x)=u(x_0)+Du(x_0)\cdot(x-x_0)+o(|x-x_0|)$, as $x\rightarrow x_0$. Choosing $R$ so large that $\Omega\subset B_R(x_0)$, we have by the previous proposition that
$$
\frac{\|u\|_\infty}{R^{\frac{p-n}{p-1}}}|x-x_0|^\frac{p-n}{p-1}\le|u(x)-u(x_0)|\le |Du(x_0)||x-x_0|+o(|x-x_0|),
$$
as $x\rightarrow x_0$. That is, 
$$
\frac{\|u\|_\infty}{R^{\frac{p-n}{p-1}}}\frac{1}{|x-x_0|^{1-\frac{p-n}{p-1}}}\le |Du(x_0)|+o(1).
$$
This inequality can not be true since $\frac{p-n}{p-1}\in (0,1)$. If $\frac{p-n}{p-1}=1$, then $n=1$ and the claim trivially holds since $u$ is then of the form \eqref{TalentiLoc}.
\end{proof}
We will now refine the above estimates to deduce the exact behavior of a solution $u$ of \eqref{pGroundStateMeasure} near $x_0$. The following 
proposition relies on the results of Kichenassamy and Veron in \cite{KV}.
\begin{prop}\label{LimitProp}
Assume that $u$ is a solution of \eqref{pGroundStateMeasure}. Then 
$$
\lim_{x\rightarrow x_0}\frac{|u(x)-u(x_0)|}{|x-x_0|^\frac{p-n}{p-1}}
=\|u\|_\infty\left(\frac{p-1}{p-n}\right)\left(\frac{\lambda_p}{n\omega_n}\right)^{\frac{1}{p-1}}
$$
and
$$
\lim_{x\rightarrow x_0}\frac{|Du(x)|}{|x-x_0|^{\frac{p-n}{p-1}-1}}
=\|u\|_\infty\left(\frac{\lambda_p}{n\omega_n}\right)^{\frac{1}{p-1}}.
$$
Here $\omega_n$ is the Lebesgue measure of $B_1(0)\subset \R^n$.
\end{prop}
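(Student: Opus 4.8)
The plan is to identify the singular behavior of $u$ at $x_0$ with that of the fundamental solution of the $p$-Laplacian via the isolated-singularity analysis of Kichenassamy and V\'eron \cite{KV}, and then to compute the constant by a flux argument. By the earlier corollary $u$ has a fixed sign, so I may assume $u>0$; after a translation take $x_0=0$ and fix $\rho>0$ with $B_\rho(0)\subset\Omega$. On $B_\rho(0)\setminus\{0\}$ the Dirac mass in \eqref{pGroundStateMeasure} is absent, so $u$ is $p$-harmonic there and, by the regularity theory recalled above, of class $C^1$ away from the origin. Set $\tilde u:=u(0)-u$. Since $u(0)=\|u\|_\infty$ is the global maximum of $u$, $\tilde u\ge 0$ on $\Omega$; since $\Delta_p(c-u)=-\Delta_p u$ for any constant $c$, $\tilde u$ is $p$-harmonic in $B_\rho(0)\setminus\{0\}$; and $\tilde u$ is continuous with $\tilde u(0)=0$. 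Writing $\mu_p(x):=|x|^{\frac{p-n}{p-1}}$, the preceding proposition (estimate \eqref{HolderEst}) shows $\tilde u$ is trapped between two positive multiples of $\mu_p$ near $0$, and the corollary after it shows $\tilde u$ is not $C^1$ at $0$.

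Next I would invoke \cite{KV}: a nonnegative $p$-harmonic function on a punctured ball whose singularity at the puncture is not removable is asymptotic to a multiple of the fundamental solution. Combined with the two-sided bound just noted this produces a constant $\gamma>0$ with $\tilde u(x)=\gamma\mu_p(x)+o(\mu_p(x))$ and, more importantly, $D\tilde u(x)=\gamma D\mu_p(x)+o\big(|x|^{\frac{p-n}{p-1}-1}\big)$ as $x\to 0$. Checking that the hypotheses of the Kichenassamy--V\'eron classification genuinely apply in the present regime $p>n$, where $\tilde u$ is a bounded function with an unbounded gradient, is the step I expect to be the main obstacle.

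It remains to identify $\gamma$. Testing the weak form of \eqref{pGroundStateMeasure} against a cutoff $\phi\in C^\infty_c(\Omega)$ with $\phi\equiv 1$ on $\overline{B_\rho(0)}$ and integrating by parts over $\Omega\setminus\overline{B_\rho(0)}$, where $u$ is $C^1$ and $p$-harmonic, yields the flux identity
\[
\lambda_p\|u\|_\infty^{p-1}=-\int_{\partial B_\rho(0)}|Du|^{p-2}Du\cdot\frac{x}{\rho}\,dS
\]
for all small $\rho>0$. Substituting $Du=-D\tilde u$ and the asymptotics above, and using $\big(\tfrac{p-n}{p-1}-1\big)(p-1)=1-n$ and $|\partial B_\rho(0)|=n\omega_n\rho^{n-1}$, the right-hand side converges as $\rho\to0$ to $\big(\gamma\tfrac{p-n}{p-1}\big)^{p-1}n\omega_n$, so $\gamma\tfrac{p-n}{p-1}=\|u\|_\infty\big(\lambda_p/(n\omega_n)\big)^{1/(p-1)}$. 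Reading $|u(x)-u(x_0)|=\tilde u(x)$ and $|Du(x)|=|D\tilde u(x)|$ off the expansions then gives the two claimed limits, the first with constant $\gamma=\|u\|_\infty\tfrac{p-1}{p-n}\big(\lambda_p/(n\omega_n)\big)^{1/(p-1)}$ and the second with constant $\gamma\tfrac{p-n}{p-1}$.
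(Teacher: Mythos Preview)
Your proposal is correct and follows essentially the same route as the paper: invoke the Kichenassamy--V\'eron classification \cite{KV} to obtain the asymptotic expansion with an undetermined constant $\gamma$, then identify $\gamma$ by a flux/integration-by-parts computation. The paper handles your flagged concern about the $p>n$ regime by citing Theorem 1.1 together with Remark 1.6 of \cite{KV}; the only cosmetic differences are that the paper normalizes $\|u\|_\infty=1$ and evaluates the flux by writing $\lambda_p=\int_\Omega|Du|^p\,dx=\lim_{r\to 0^+}\int_{\partial B_r(x_0)}|Du|^{p-1}\,d\sigma$ (using the directional limit from \cite{KV} to turn the normal derivative into $|Du|$), whereas you use a cutoff test function and the full vector asymptotics---these are equivalent.
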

\begin{proof}
Without any loss of generality, we may assume that $u$ is positive in $\Omega$ and that $u(x_0)=1$. Recall that $u$ is $p$-harmonic in $\Omega\setminus \{x_0\}$; and in view of Proposition \ref{holderprop}, $u$ satisfies $0\le 1-u(x)\leq C|x-x_0|^\frac{p-n}{p-1}$ in $\Omega$ for some constant $C$. This permits us to use Theorem 1.1 and Remark 1.6 in \cite{KV} to conclude that there is $\gamma> 0$ such that 
\begin{equation}\label{uas}
\lim_{x\rightarrow x_0}\frac{1-u(x)}{|x-x_0|^\frac{p-n}{p-1}}
=\gamma,
\end{equation}
\begin{equation}\label{duas}
\lim_{x\rightarrow x_0}\frac{|Du(x)|}{|x-x_0|^{\frac{p-n}{p-1}-1}}
=\left(\frac{p-n}{p-1}\right)\gamma,
\end{equation}
and
\begin{equation}\label{tuas}
\lim_{x\rightarrow x_0}\left(\frac{Du(x)}{|Du(x)|}+\frac{x-x_0}{|x-x_0|}\right)=0.
\end{equation}

\par We may integrate by parts and exploit  \eqref{tuas} to get  
\begin{align*}
\lambda_p& =\int_{\Omega}|Du|^{p}dx\\
&=\lim_{r\rightarrow 0^+}\int_{\Omega\setminus B_{r}(x_0)}|Du|^{p}dx\\
&=\lim_{r\rightarrow 0^+}\int_{\Omega\setminus B_{r}(x_0)}\text{div}(u|Du|^{p-2}Du)dx\\
&=\lim_{r\rightarrow 0^+}\int_{\partial B_{r}(x_0)}u|Du|^{p-2}Du\cdot \left(- \frac{x-x_0}{|x-x_0|}\right)d\sigma\\
&=\lim_{r\rightarrow 0^+}\int_{\partial B_{r}(x_0)}u|Du|^{p-1}d\sigma.
\end{align*}
Here $\sigma$ is $n-1$ dimensional Hausdorff measure.  In view of \eqref{uas}, we actually have
\begin{equation}\label{LamPeeLimit}
\lambda_p = \lim_{r\rightarrow 0^+}\int_{\partial B_{r}(x_0)}|Du|^{p-1}d\sigma.
\end{equation}

\par Now we can apply \eqref{duas}. This limit gives 
$$
|Du(x)|^{p-1}= \left(\frac{p-n}{p-1}\gamma\right)^{p-1}|x-x_0|^{-(n-1)}+o(1)
$$
as $x\rightarrow x_0$. By \eqref{LamPeeLimit},  
$$
\lambda_p =  \lim_{r\rightarrow 0^+}\int_{\partial B_{r}(x_0)}\left[\left(\frac{p-n}{p-1}\gamma\right)^{p-1}r^{-(n-1)}+o(1)\right]d\sigma = \left(\frac{p-n}{p-1}\gamma\right)^{p-1}n\omega_n
$$
which concludes the proof. 
\end{proof}
\begin{rem}\label{potentialLim1}
A function $w\in W^{1,p}_0(\Omega)$ that satisfies 
\begin{equation}\label{PotentialPDE}
\begin{cases}
-\Delta_pw=0,& \quad x\in\Omega\setminus\{x_0\}\\
\hspace{.33in}w=1,& \quad x=x_0\\
\hspace{.33in}w=0,& \quad x\in \partial\Omega
\end{cases}
\end{equation}
weakly is called a {\it potential} function.  Observe that every extremal is a multiple of a potential function but not vice versa. For instance, if $\Omega=B_1(0)$, then $w$ is an extremal if and only if $x_0=0$.  

\par The strong maximum principle for $p$-harmonic functions implies that $0<w<1$ in $\Omega\setminus\{x_0\}$. In particular, $w$ is uniquely maximized at $x_0$.   Using similar arguments as in the proof of Proposition \ref{LimitProp}, one can easily show that 
$$
-\Delta_pw=\lambda \delta_{x_0}
$$
in $\Omega$ where $$
\lambda:=\int_\Omega|Dw|^pdx.
$$
Therefore, the conclusion of Proposition \ref{LimitProp} holds for $w$ with $\lambda$ replacing $\lambda_p$.
\end{rem}

\section{Support function of an extremal}\label{SuppSec}
Suppose now that $\Omega$ is convex and that $u\in W^{1,p}_0(\Omega)$ is a positive extremal which achieves its maximum at $x_0\in \Omega$.  By Corollary \ref{sol}, the results of Lewis \cite{JLewis} imply that
\begin{equation}\label{CapacitaryFun}
\begin{cases}
\text{ $-(x-x_0)\cdot Du(x)>0$ for  $x\in\Omega \setminus \{x_0\}$,} \\
\text{ $u$ is locally real analytic in $\Omega\setminus \{x_0\}$,}\\
\text{$\{u>t\}$ is convex for each $t\in \R$, and}\\
\text{$\{u=t\}$ has positive Gaussian curvature for each $t\in (0,\|u\|_\infty)$}. 
\end{cases}
\end{equation}
By the implicit function theorem, it also follows that the level sets of $u$ are smooth. 

\par We define the {\it support function} of $u$ as
\begin{equation}\label{SuppExtreme}
h(\xi,t):=\sup\left\{x\cdot \xi: x\in \overline{\Omega}, u(x)\ge t\right\}
\end{equation}
$\xi\in \R^n, t\in [0,\|u\|_\infty]$. For $t\in [0,\|u\|_\infty]$, $h(\cdot,t)$ is the usual support function of the convex set $\{u\ge t\}$; if $u(0)\ge t$ and $|\xi|=1$, $h(\xi,t)$ represents the distance from the origin to the hyperplane that supports $\{u\ge t\}$ with outward normal $\xi$. It follows from \eqref{CapacitaryFun} and Theorem 4 of \cite{ColSal}, $h\in C^\infty(\R^n\setminus\{0\}\times(0,\|u\|_\infty))$ and $h_t<0$.

\par Suppose $u(x_0)=\|u\|_\infty$. Then for 
$$
x\in \Omega\setminus\{x_0\},\quad\xi=-\frac{Du(x)}{|Du(x)|},\quad \text{and}\quad t=u(x),
$$
we have
$$
h(\xi,t)=x\cdot \xi,\quad h_t(\xi,t)=-\frac{1}{|Du(x)|},\quad \text{and}\quad D_\xi h(\xi,t)=x.
$$
See \cite{LonSal}. In particular, since $\xi$ is the outward unit normal to the hypersurface $\{u=t\}$ at the point $x$,  $D_\xi h(\xi,t)$ is the inverse image of the Gauss map at $x$. Moreover, as $D^2_\xi h(\xi,t)\xi=0$, the restriction of the linear transformation $D^2_\xi h(\xi,t):\R^n\rightarrow\R^n$ to $\xi^\perp:=\{z\in \R^n: z\cdot \xi=0\}$ is the inverse of the second fundamental form of $\{u=t\}$ at the point $x$ (see Section 2.5 of \cite{RS} for more on this point). In particular, $D^2_\xi h(\xi,t)|_{\xi^\perp}:\xi^\perp\rightarrow \xi^\perp$ is positive definite and its eigenvalues are the reciprocals of the principle curvatures of $\{u=t\}$ at $x$. 

\par Recall that $-\Delta_pu=0$ in $\Omega\setminus\{x_0\}$. Using this equation, Colesanti and Salani proved (in Proposition 1 of \cite{ColSal}) that $h$ satisfies 
\begin{equation}\label{plaph}
h_t^2\tr\left[\left(D^2_\xi h|_{\xi^\perp}\right)^{-1}\right]+(p-1)\left( \left(D^2_\xi h|_{\xi^\perp}\right)^{-1} \nabla_\xi h_t\cdot  \nabla_\xi h_t-h_{tt}\right)=0
\end{equation}
for each $|\xi|=1$ and $t\in (0,\|u\|_\infty)$. Here $\nabla_\xi h_t:=(I_n-\xi\otimes\xi)D_\xi h_t$ is the projection of the gradient of $h_t$ onto $\xi^\perp$. Equation \eqref{plaph} will 
have an important role in our proof of Theorem \ref{mainThm}.

\section{Convex domains}\label{UniquenessArgs}
Throughout this section, we will assume that $\Omega\subset \R^n$ is a bounded convex domain. We will also suppose that $u_0, u_1\in W^{1,p}_0(\Omega)$ are positive extremal functions which satisfy 
$$
\begin{cases}
u_0(x_0)=\|u_0\|_\infty=1\\
u_1(x_1)=\|u_1\|_\infty=1
\end{cases}
$$
for some $x_0,x_1\in \Omega$.  We aim to show that 
\begin{equation}\label{CrucialEquality}
u_0\equiv u_1.
\end{equation} 
It is easy to see that Theorem \ref{mainThm} follows from \eqref{CrucialEquality}. 

\par For each $\rho\in (0,1)$, we define the {\it Minkowski combination} of $u_0$ and $u_1$
$$
u_\rho(z):=\sup\left\{\min\{u_0(x),u_1(y)\}: z=(1-\rho)x+\rho y,\; x,y\in \overline{\Omega}\right\}
$$
$z\in \overline{\Omega}$. We recall that $u_0$ and $u_1$ are quasiconcave. Using the definition above, it is straightforward to verify 
\begin{equation}\label{SuperLevelSetIdentity}
\{u_\rho>t\}=(1-\rho)\{u_0>t\}+\rho \{u_1>t\}
\end{equation}
for each $t\in \R$. Here the addition is the usual Minkowski addition of convex sets.  In particular, $u_\rho$ itself is quasiconcave.

\par The Minkowski combination was introduced in work of Borell in \cite{Borell} when he studied capacitary functions; although his work was motivated by the previous papers of Lewis \cite{JLewis} and Gabriel \cite{Gabriel}.   We also were particularly inspired to utilize the Minkowski combination after we became aware of the work of Colesanti and Salani in \cite{ColSal}, who verified a Brunn-Minkowski inequality for $p$-capacitary functions $(1<p<n)$, and the work of Cardaliaguet and Tahraoui in \cite{CarTah} on the strict concavity of the harmonic radius. 

\par Along the way to proving \eqref{CrucialEquality}, we will need some other useful properties of $u_\rho$.
\begin{prop}\label{uRhoProp} Define 
$$
x_\rho:=(1-\rho)x_0+\rho x_1.
$$
Then the following hold:\\
\noindent $(i)$ $u_\rho(x_\rho)=\|u_\rho\|_\infty=1$.\\
$(ii)$ $u_\rho|_{\partial\Omega}=0$.\\
$(iii)$ $u_\rho\in C^\infty(\Omega\setminus\{x_\rho\})\cap C(\overline{\Omega})$.\\
$(iv)$ For each $z\in \Omega\setminus\{x_\rho\}$, there are $x\in \Omega\setminus\{x_0\}$ and $y\in \Omega\setminus\{x_1\}$ such that 
\begin{align*}
z&=(1-\rho)x+\rho y,&\\
u_\rho(z)&=u_0(x)=u_1(y),&\\
\frac{Du_\rho(z)}{|Du_\rho(z)|}&=\frac{Du_0(x)}{|Du_0(x)|}=\frac{Du_1(y)}{|Du_1(y)|},&\\
\frac{1}{|Du_\rho(z)|}&=(1-\rho)\frac{1}{|Du_0(x)|}+\rho\frac{1}{|Du_1(y)|}, &\\
\frac{D^2u_\rho(z)}{|Du_\rho(z)|^3}&\ge (1-\rho)\frac{D^2u_0(x)}{|Du_0(x)|^3}+\rho\frac{D^2u_1(y)}{|Du_1(y)|^3}.&
\end{align*}
\end{prop}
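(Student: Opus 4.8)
Everything runs through the super-level set identity \eqref{SuperLevelSetIdentity} and the fact that passing to support functions linearizes the Minkowski combination. Write $h_0,h_1,h_\rho$ for the support functions of $u_0,u_1,u_\rho$ in the sense of \eqref{SuppExtreme}. Since the support function of a Minkowski sum (respectively, a dilate) of convex bodies is the sum (respectively, the dilate) of the support functions, \eqref{SuperLevelSetIdentity} gives
$$
h_\rho(\xi,t)=(1-\rho)h_0(\xi,t)+\rho h_1(\xi,t),\qquad \xi\in\R^n,\ t\in[0,1],
$$
once one checks that for $t\in(0,1)$ the set $\{u_\rho\ge t\}$ is a convex body equal to $(1-\rho)\{u_0\ge t\}+\rho\{u_1\ge t\}$ and to $\overline{\{u_\rho>t\}}$; the first equality holds because Minkowski addition commutes with a decreasing intersection of compact convex sets. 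Parts $(i)$ and $(ii)$ are then elementary: from $u_0,u_1\le 1$ we get $u_\rho\le 1$, while $x_\rho\in(1-\rho)\{u_0>t\}+\rho\{u_1>t\}=\{u_\rho>t\}$ for every $t<1$, so $u_\rho(x_\rho)=1=\|u_\rho\|_\infty$, and $\{u_\rho\ge 1\}=(1-\rho)\{x_0\}+\rho\{x_1\}=\{x_\rho\}$ shows the maximum is attained only there; for $t>0$ the compact sets $\{u_0\ge t\},\{u_1\ge t\}$ lie in $\Omega$, so by convexity of $\Omega$ so does $\{u_\rho\ge t\}$, and letting $t\downarrow0$ yields $u_\rho|_{\partial\Omega}=0$ (and $\{u_\rho>0\}=(1-\rho)\Omega+\rho\Omega=\Omega$, so $u_\rho>0$ in $\Omega$). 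Continuity of $u_\rho$ on $\overline\Omega$ also drops out, since a Minkowski sum of open sets is open (so each $\{u_\rho>t\}$ is open, giving lower semicontinuity) and $\{u_\rho\ge t\}$ is closed (giving upper semicontinuity).

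\textbf{Smoothness, part $(iii)$.} Combine \eqref{CapacitaryFun} with Theorem~4 of \cite{ColSal}: $h_0,h_1\in C^\infty(\R^n\setminus\{0\}\times(0,1))$, $(h_i)_t<0$, and $D^2_\xi h_i|_{\xi^\perp}$ is positive definite (this is the positivity of the Gaussian curvature of the level sets). By the displayed identity, $h_\rho$ inherits all three properties, so for each $t\in(0,1)$ the convex body $\{u_\rho\ge t\}$ has $C^\infty$ boundary of positive Gaussian curvature and $\xi\mapsto D_\xi h_\rho(\xi,t)$ is a diffeomorphism of $S^{n-1}$ onto $\{u_\rho=t\}$. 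Hence $G(\xi,t):=D_\xi h_\rho(\xi,t)$ is a smooth bijection of $S^{n-1}\times(0,1)$ onto $\Omega\setminus\{x_\rho\}$ (using $u_\rho>0$ in $\Omega$ and $u_\rho=1$ only at $x_\rho$), and its differential is invertible: the $\xi$-partials span $\xi^\perp$ through the positive-definite $D^2_\xi h_\rho|_{\xi^\perp}$, while $\partial_tG\cdot\xi=\partial_t(D_\xi h_\rho\cdot\xi)=h_{\rho,t}\ne0$ by Euler's identity, so $\partial_tG\notin\xi^\perp$. Thus $G$ is a diffeomorphism, and since $u_\rho\equiv t$ on $G(S^{n-1}\times\{t\})$, $u_\rho$ is the composition of $G^{-1}$ with the projection onto the $t$-coordinate; in particular $u_\rho\in C^\infty(\Omega\setminus\{x_\rho\})$.

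\textbf{Part $(iv)$.} Fix $z\in\Omega\setminus\{x_\rho\}$, put $t:=u_\rho(z)\in(0,1)$, and let $\xi$ be the outward unit normal to $\{u_\rho=t\}$ at $z$, so $z=D_\xi h_\rho(\xi,t)$. Set $x:=D_\xi h_0(\xi,t)\in\{u_0=t\}$ and $y:=D_\xi h_1(\xi,t)\in\{u_1=t\}$; both lie in $\Omega$, away from $x_0,x_1$, with common outward normal $\xi$. The identity $h_\rho=(1-\rho)h_0+\rho h_1$ immediately gives $z=(1-\rho)x+\rho y$ and $u_\rho(z)=t=u_0(x)=u_1(y)$, and differentiating it in $t$, combined with the dictionary of Section~\ref{SuppSec} (at a point with outward normal $\xi$ one has $Du/|Du|=-\xi$ and $h_t(\xi,t)=-1/|Du|$), yields the parallel-gradient identity and $1/|Du_\rho(z)|=(1-\rho)/|Du_0(x)|+\rho/|Du_1(y)|$. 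For the Hessian inequality the plan is to produce a $C^2$ barrier that reconstructs $u_\rho$ near $z$: by part $(iii)$ the maps $z'\mapsto\xi(z')$ (the normal) and $z'\mapsto u_\rho(z')$ are smooth, so $x(z'):=D_{\xi(z')}h_0(\xi(z'),u_\rho(z'))$ and $y(z'):=D_{\xi(z')}h_1(\xi(z'),u_\rho(z'))$ are smooth selections with $z'=(1-\rho)x(z')+\rho y(z')$ and $u_\rho(z')=u_0(x(z'))=u_1(y(z'))$ identically near $z$. Differentiating $u_\rho=u_0\circ x=u_1\circ y$ twice, and using $(1-\rho)Dx+\rho Dy=I$, $(1-\rho)D^2x_k+\rho D^2y_k=0$ together with the support-function identities, expresses $D^2u_\rho(z)$ in terms of $D^2u_0(x),D^2u_1(y)$ and the Jacobians $Dx(z)=D\Phi_0\,(D\Phi_\rho)^{-1}$, $Dy(z)=D\Phi_1\,(D\Phi_\rho)^{-1}$, where $\Phi_i:=D_\xi h_i$ and $D\Phi_\rho=(1-\rho)D\Phi_0+\rho D\Phi_1$. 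After splitting into the $\xi^\perp$-block (where the support-function relations already force an equality) and the normal directions, the asserted inequality follows from the matrix Cauchy--Schwarz inequality $\bigl(\sum_iX_i\bigr)\bigl(\sum_iQ_i\bigr)^{-1}\bigl(\sum_iX_i\bigr)\preceq\sum_iX_iQ_i^{-1}X_i$ for positive definite $Q_i$ (equivalently, operator convexity of $(X,Q)\mapsto XQ^{-1}X$), which is precisely what converts the convex combination sitting inside an inverse into ``$\ge$'' after the normalizations by $|Du_0(x)|^3,|Du_1(y)|^3,|Du_\rho(z)|^3$ are accounted for.

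\textbf{Main obstacle.} I expect the extraction of the full Hessian inequality to be the only hard point: one must relate every block of $D^2u/|Du|^3$ (tangent--tangent, tangent--normal, normal--normal) along the three aligned level sets to the second derivatives of $h_0,h_1,h_\rho$, keep careful track of the three distinct gradient magnitudes entering the normalizations, and invoke operator convexity of the matrix inverse to land the inequality on the correct side. Everything preceding it—parts $(i)$, $(ii)$, the continuity of $u_\rho$, and the smoothness in part $(iii)$—is soft, using only convexity of $\Omega$, elementary properties of Minkowski sums and support functions, and the inverse function theorem.
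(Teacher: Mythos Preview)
The paper does not actually prove this proposition: it omits the proof and simply records that $(i)$ and $(ii)$ are elementary, that $(iii)$ follows from Theorem~4 of \cite{ColSal} together with Theorem~1 of \cite{JLewis}, and that $(iv)$ is taken from Section~2 of \cite{CarTah} or Section~7 of \cite{LonSal}. Your sketch is therefore not competing with a proof in the paper but rather fleshing out those references, and it does so along the same lines they use: the linearization $h_\rho=(1-\rho)h_0+\rho h_1$ of the Minkowski combination at the level of support functions, the inverse-Gauss-map diffeomorphism $G(\xi,t)=D_\xi h_\rho(\xi,t)$ to recover smoothness of $u_\rho$, and the support-function dictionary $Du/|Du|=-\xi$, $h_t=-1/|Du|$ to read off the first four identities in $(iv)$.

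Your treatment of $(i)$--$(iii)$ and of the first four identities in $(iv)$ is correct and matches the cited sources. For the Hessian inequality you have identified the right mechanism---operator convexity of $(X,Q)\mapsto X^\top Q^{-1}X$ (equivalently, the matrix Cauchy--Schwarz inequality for positive definite $Q_i$)---applied after pulling back via $D\Phi_i(D\Phi_\rho)^{-1}$. This is exactly the content of the argument in \cite{LonSal}, and your outline is accurate, though as you acknowledge it remains a sketch: carrying out the block decomposition and the three different $|Du|^3$ normalizations cleanly is the genuine work here, and your proposal correctly flags it as the main obstacle rather than pretending it is routine.
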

We omit the proof of the above proposition. However, we remark that $(i)$ and $(ii)$ are elementary; Theorem 4 of \cite{ColSal} and Theorem 1 of \cite{JLewis} together imply $(iii)$; and $(iv)$ follows from Section 2 of \cite{CarTah} or Section 7 of \cite{LonSal}. Using these properties we will verify that $u_\rho$ itself is an extremal for each $\rho\in (0,1)$.

\begin{lem}
$u_\rho$ is extremal. 
\end{lem}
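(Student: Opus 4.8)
Since $u_\rho\in W^{1,p}_0(\Omega)$ with $\|u_\rho\|_\infty=1$ (Proposition~\ref{uRhoProp}$(i)$--$(iii)$), inequality \eqref{SecondMorrey} already gives $\int_\Omega|Du_\rho|^p\,dx\ge\lambda_p$, so the whole point is the reverse bound $\int_\Omega|Du_\rho|^p\,dx\le\lambda_p$. I would obtain this from the coarea formula
$$
\int_\Omega|Du_\rho|^p\,dx=\int_0^1 F_\rho(t)\,dt,\qquad F_\rho(t):=\int_{\{u_\rho=t\}}|Du_\rho|^{p-1}\,d\mathcal{H}^{n-1},
$$
by showing $F_\rho(t)\le\lambda_p$ for each $t\in(0,1)$. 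That level-set flux estimate rests on two facts: $u_\rho$ is $p$-subharmonic away from $x_\rho$, and the singularity of $u_\rho$ at $x_\rho$ carries exactly the mass $\lambda_p$.

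\emph{Step 1 ($p$-subharmonicity).} Fix $z\in\Omega\setminus\{x_\rho\}$ and take $x\ne x_0$, $y\ne x_1$ and the common gradient direction $e$ as in Proposition~\ref{uRhoProp}$(iv)$. Whenever $Dw/|Dw|=e$ we have $\Delta_pw=|Dw|^{p-2}\big((p-1)\langle D^2w\,e,e\rangle+\tr(D^2w|_{e^\perp})\big)$, and the map $A\mapsto(p-1)\langle Ae,e\rangle+\tr(A|_{e^\perp})$ on symmetric matrices is linear and, since $p-1>0$, monotone for the usual order. Applying it to the matrix inequality
$$
\frac{D^2u_\rho(z)}{|Du_\rho(z)|^3}\ \ge\ (1-\rho)\,\frac{D^2u_0(x)}{|Du_0(x)|^3}+\rho\,\frac{D^2u_1(y)}{|Du_1(y)|^3}
$$
of $(iv)$, and using $\Delta_pu_0(x)=\Delta_pu_1(y)=0$ (each of $u_0,u_1$ is $p$-harmonic off its maximum point), gives $\Delta_pu_\rho(z)\ge0$. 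Since $u_\rho\in C^\infty(\Omega\setminus\{x_\rho\})$ with nonvanishing gradient there (Proposition~\ref{uRhoProp}$(iii)$--$(iv)$), $u_\rho$ is $p$-subharmonic in $\Omega\setminus\{x_\rho\}$ in the classical sense.

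\emph{Step 2 (flux estimate).} Fix $t\in(0,1)$ and $r>0$ small enough that $\overline{B_r(x_\rho)}\subset\{u_\rho>t\}$; here $\{u_\rho\ge t\}$ is compact in $\Omega$ and contains $x_\rho$. Applying the divergence theorem to $|Du_\rho|^{p-2}Du_\rho$, whose divergence $\Delta_pu_\rho$ is $\ge0$, on $\{u_\rho>t\}\setminus\overline{B_r(x_\rho)}$, and noting that the outward unit normal is $-Du_\rho/|Du_\rho|$ along $\{u_\rho=t\}$ and $-(x-x_\rho)/|x-x_\rho|$ along $\partial B_r(x_\rho)$, I obtain
$$
F_\rho(t)\ \le\ \int_{\partial B_r(x_\rho)}|Du_\rho|^{p-2}Du_\rho\cdot\Big(-\tfrac{x-x_\rho}{|x-x_\rho|}\Big)\,d\mathcal{H}^{n-1}.
$$
I would then show that, as $z\to x_\rho$, $u_\rho$ has the same asymptotics as an extremal,
$$
\frac{1-u_\rho(z)}{|z-x_\rho|^{\frac{p-n}{p-1}}}\to\gamma,\qquad \frac{|Du_\rho(z)|}{|z-x_\rho|^{\frac{p-n}{p-1}-1}}\to\tfrac{p-n}{p-1}\gamma,\qquad \frac{Du_\rho(z)}{|Du_\rho(z)|}+\frac{z-x_\rho}{|z-x_\rho|}\to 0,
$$
with $\gamma=\big(\tfrac{p-1}{p-n}\big)\big(\tfrac{\lambda_p}{n\omega_n}\big)^{1/(p-1)}$ as in Proposition~\ref{LimitProp}. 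Granting this, the integrand above equals $\big(\tfrac{p-n}{p-1}\gamma\big)^{p-1}|x-x_\rho|^{-(n-1)}+o(1)$ on $\partial B_r(x_\rho)$, so sending $r\to0^+$ gives $F_\rho(t)\le\big(\tfrac{p-n}{p-1}\gamma\big)^{p-1}n\omega_n=\lambda_p$. Integrating in $t$ yields $\int_\Omega|Du_\rho|^p\,dx\le\lambda_p$, and combined with the reverse inequality this shows $u_\rho$ is extremal.

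\emph{Main obstacle.} The only substantial point is the asymptotics of $u_\rho$ at $x_\rho$. I would derive it from Proposition~\ref{LimitProp}, applied to $u_0$ at $x_0$ and to $u_1$ at $x_1$ --- both being extremal, they yield the \emph{same} constant $\gamma$ --- together with the matching relations of Proposition~\ref{uRhoProp}$(iv)$. For $z\to x_\rho$ choose the matched $x\to x_0$, $y\to x_1$; from $u_\rho(z)=u_0(x)=u_1(y)$ and $1-u_i=\gamma|\cdot-x_i|^{\frac{p-n}{p-1}}(1+o(1))$ one gets $|x-x_0|=|y-x_1|(1+o(1))$, while \eqref{tuas} for $u_0$ and $u_1$ forces $\tfrac{x-x_0}{|x-x_0|}$ and $\tfrac{y-x_1}{|y-x_1|}$ to a common limit $-e$; hence $z-x_\rho=(1-\rho)(x-x_0)+\rho(y-x_1)=-|x-x_0|\,e\,(1+o(1))$, so $|z-x_\rho|\sim|x-x_0|$. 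Substituting this into $1-u_\rho(z)=1-u_0(x)$ and into $|Du_\rho(z)|^{-1}=(1-\rho)|Du_0(x)|^{-1}+\rho|Du_1(y)|^{-1}$ from $(iv)$ produces the three displayed limits. (These asymptotics also show $|Du_\rho|^p$ is integrable near $x_\rho$, since $\tfrac{p-n}{p-1}>\tfrac{p-n}{p}$, which together with $(ii)$ confirms $u_\rho\in W^{1,p}_0(\Omega)$ and legitimizes the coarea step.)
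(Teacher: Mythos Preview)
Your argument is correct and shares the paper's architecture: establish $p$-subharmonicity of $u_\rho$ away from $x_\rho$ (your Step~1 is identical to the paper's), convert this via the divergence theorem into an upper bound for $\int_\Omega|Du_\rho|^p$ in terms of a small-sphere flux around $x_\rho$, and identify that flux as $\lambda_p$ from the local asymptotics.

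The substantive difference is in how you obtain the asymptotics at $x_\rho$. The paper first introduces the potential function $w_\rho$ solving \eqref{PotentialPDE} with pole $x_\rho$, uses the comparison $u_\rho\le w_\rho$ (Borell's inequality) together with the translated points $x^k=z^k+(x_0-x_\rho)$, $y^k=z^k+(x_1-x_\rho)$ to force $\int_\Omega|Dw_\rho|^p=\lambda_p$ and hence the value limit \eqref{urhoLimit1}; only then does it turn to the matched points of Proposition~\ref{uRhoProp}$(iv)$ and the already-established \eqref{urhoLimit1} to deduce $|x^k-x_0|/|z^k-x_\rho|\to 1$ and the gradient limit. You bypass the potential function entirely: invoking the Kichenassamy--V\'eron direction limit \eqref{tuas} for $u_0$ and $u_1$ and the common unit normal $e$ in $(iv)$, you get $(x-x_0)/|x-x_0|=-e+o(1)=(y-x_1)/|y-x_1|$, which combined with $|x-x_0|\sim|y-x_1|$ (from the equal values and Proposition~\ref{LimitProp}) yields $z-x_\rho=-|x-x_0|\,e\,(1+o(1))$ and hence all three limits simultaneously. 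This is cleaner---no auxiliary comparison function, no separate computation of its Dirichlet energy---at the price of relying on \eqref{tuas}, which the paper records only inside the proof of Proposition~\ref{LimitProp} rather than in its statement. Your coarea/level-set packaging of the divergence estimate is a cosmetic variant of the paper's direct integration of $\operatorname{div}(u_\rho|Du_\rho|^{p-2}Du_\rho)$ over $\Omega\setminus B_r(x_\rho)$; the paper's route has the minor advantage that after bounding $Du_\rho\cdot(-\nu)\le|Du_\rho|$ it needs only the magnitude asymptotic, whereas you also use the direction limit---but you have it.
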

\begin{proof}
We first show that $u_\rho$ is $p$-subharmonic and integrate by parts to derive an upper bound on the integral $\int_\Omega|Du_\rho|^pdz$. Then we show that $u_\rho$ satisfies the limits in Proposition \ref{LimitProp} (that are also satisfied by every extremal function). Finally, we combine the upper bound and limits to arrive at the desired conclusion.

\par 1. Let $z\in \Omega\setminus\{x_\rho\}$, and select $x\in \Omega\setminus\{x_0\}$ and $y\in \Omega\setminus\{x_1\}$ such that 
$$
e:=\frac{Du_\rho(z)}{|Du_\rho(z)|}=\frac{Du_0(x)}{|Du_0(x)|}=\frac{Du_1(y)}{|Du_1(y)|}
$$ 
and  
$$
\frac{D^2u_\rho(z)}{|Du_\rho(z)|^3}\ge (1-\rho)\frac{D^2u_0(x)}{|Du_0(x)|^3}+\rho\frac{D^2u_1(y)}{|Du_1(y)|^3}.
$$
Recall that such $x,y$ exist by Proposition \ref{uRhoProp}.  We have
\begin{align*}
|Du_\rho(z)|^{-(p+1)}\Delta_pu_\rho(z)&=\frac{\Delta u_\rho(z)}{|Du_\rho(z)|^3}+(p-2)\frac{D^2u_\rho(z)e\cdot e}{|Du_\rho(z)|^3}\\
&=\left(I_n+(p-2)e\otimes e\right)\cdot \frac{D^2u_\rho(z)}{|Du_\rho(z)|^3}.
\end{align*}
Note that $\min\{1,p-1\}>0$ is a lower bound on the eigenvalues of the matrix $I_n+(p-2)e\otimes e$. Therefore, 
\begin{align*}
|Du_\rho(z)|^{-(p+1)}\Delta_pu_\rho(z)&\ge\left(I_n+(p-2)e\otimes e\right)\cdot\left( (1-\rho)\frac{D^2u_0(x)}{|Du_0(x)|^3}+\rho\frac{D^2u_1(y)}{|Du_1(y)|^3}\right)\\
&= (1-\rho)|Du_0(x)|^{-(p+1)}\Delta_pu_0(x)+ \rho|Du_1(y)|^{-(p+1)}\Delta_pu_1(y)\\
&= (1-\rho)\cdot 0+ \rho\cdot 0\\
&=0.
\end{align*}
Consequently, $-\Delta_p u_\rho\le 0$ in $\Omega\setminus\{x_\rho\}$.

\par 2. The divergence theorem gives
$$
\int_{\Omega\setminus B_r(x_\rho)}\text{div}(u_\rho|Du_\rho|^{p-2}Du_\rho)dz =\int_{\partial B_r(x_\rho)}u_\rho|Du_\rho|^{p-2}Du_\rho \cdot \left(-\frac{z-x_\rho}{|z-x_\rho|}\right)d\sigma.
$$
On the other hand, since $u_\rho$ is a positive $p$-subharmonic function in $\Omega\setminus\{x_\rho\}$
\begin{align*}
\int_{\Omega\setminus B_r(x_\rho)}\text{div}(u_\rho|Du_\rho|^{p-2}Du_\rho)dz &=\int_{\Omega\setminus B_r(x_\rho)}\left(u_\rho\Delta_p u_\rho +|Du_\rho|^{p}\right)dz \\
&\ge \int_{\Omega\setminus B_r(x_\rho)}|Du_\rho|^{p}dz.
\end{align*}
As $u_\rho\le 1$,
\begin{equation}\label{UpperLiminf}
\int_{\Omega}|Du_\rho|^pdz\le \liminf_{r\rightarrow 0^+}\int_{\partial B_r(x_\rho)}|Du_\rho|^{p-1}d\sigma. 
\end{equation}

\par 3. Let $w_\rho$ be a solution of the PDE \eqref{PotentialPDE} with $x_\rho$ replacing $x_0$. As $u_\rho$ is $p$-subharmonic, $u_\rho(x_\rho)=1$ and $u_\rho|_{\partial\Omega}=0$,  weak comparison implies $u_\rho\le w_\rho.$ This is a version of Borell's inequality; see \cite{Borell, CarTah}. In particular, 
\begin{equation}\label{WlowerBound}
w_\rho(z)\ge u_\rho(z)\ge \min\{u_0(x),u_1(y)\}
\end{equation}
whenever $z=(1-\rho)x+\rho y$. 

\par  Now let $z^k\rightarrow x_\rho$ with $z^k\neq x_\rho$ for all $k\in \N$ sufficiently large.  Set
$$
\begin{cases}
x^k:=z^k+(x_0-x_\rho) \\
y^k:=z^k+(x_1-x_\rho) 
\end{cases}
$$
for each $k\in \N$. Observe $z^k=(1-\rho)x^k+\rho y^k$ and  
$$
|z^k-x_\rho|=|x^k-x_0|=|y^k-x_1|.
$$
Setting $\lambda:= \int_\Omega|Dw_\rho|^pdz$, we have from Proposition \ref{LimitProp}, Remark \ref{potentialLim1} and \eqref{WlowerBound} that
\begin{align*}
\left(\frac{p-1}{p-n}\right)\left(\frac{\lambda}{n\omega_n}\right)^\frac{1}{p-1}
&=\lim_{k\rightarrow \infty}
\frac{1-w_\rho(z^k)}{|z^k-x_\rho|^{\frac{p-n}{p-1}}}\\
&\le \lim_{k\rightarrow \infty} \max\left\{\frac{1-u_0(x^k)}{|x^k-x_0|^{\frac{p-n}{p-1}}},
\frac{1-u_1(y^k)}{|y^k-x_1|^{\frac{p-n}{p-1}}}\right\}\\
&=\left(\frac{p-1}{p-n}\right)\left(\frac{\lambda_p}{n\omega_n}\right)^\frac{1}{p-1}.
\end{align*}
It follows that $\lambda= \lambda_p$. In view of \eqref{WlowerBound}, and since the sequence $z^k$ was arbitrary,
\begin{equation}\label{urhoLimit1}
\lim_{z\rightarrow x_\rho}
\frac{1-u_\rho(z)}{|z-x_\rho|^{\frac{p-n}{p-1}}}=\left(\frac{p-1}{p-n}\right)\left(\frac{\lambda_p}{n\omega_n}\right)^\frac{1}{p-1}.
\end{equation}

\par 4. Again let $z^k\rightarrow x_\rho$ with $z^k\neq x_\rho$ for all $k\in \N$ sufficiently large. By Proposition \ref{uRhoProp}, there are 
$x^k\in \Omega\setminus\{x_0\}$ and $y^k\in \Omega\setminus\{x_1\}$ such that $z^k=(1-\rho)x^k+\rho y^k$, 
\begin{equation}\label{AllfunctionEqualk}
u_\rho(z^k)=u_0(x^k)=u_1(y^k),
\end{equation}
and
\begin{equation}\label{ReciprocalDerivativeEqualk}
\frac{1}{|Du_\rho(z^k)|}=(1-\rho)\frac{1}{|Du_0(x^k)|}+\rho\frac{1}{|Du_1(y^k)|}.
\end{equation}
Since $u_\rho(z^k)\rightarrow 1$, \eqref{AllfunctionEqualk} implies that $x^k\rightarrow x_0$ and $y^k\rightarrow x_1$ as $u_0$ and $u_1$ are uniquely maximized as these points, respectively.  Combining this fact with Proposition \ref{LimitProp}, \eqref{urhoLimit1} and again with \eqref{AllfunctionEqualk} also gives 
\begin{equation}
\label{yjlim}
\lim_{k\to\infty}\frac{|y^k-x_1|}{|z^k-x_\rho|}=\lim_{k\to\infty}\frac{|x^k-x_0|}{|z^k-x_\rho|}=1.
\end{equation}

\par By \eqref{ReciprocalDerivativeEqualk},
\begin{align*}
\frac{|z^k-x_\rho|^{\frac{p-n}{p-1}-1}}{|Du_\rho(z^k)|}&=(1-\rho)\frac{ |z^k-x_\rho|^{\frac{p-n}{p-1}-1}}{|Du_0(x^k)|}+\rho\frac{|z^k-x_\rho|^{\frac{p-n}{p-1}-1}}{|Du_1(y^k)|}\\
&=\left(\frac{|z^k-x_\rho|}{|x^k-x_0|}\right)^{\frac{p-n}{p-1}-1}(1-\rho)\frac{ |x^k-x_0|^{\frac{p-n}{p-1}-1}}{|Du_0(x^k)|}+
\left(\frac{|z^k-x_\rho|}{|y^k-x_1|}\right)^{\frac{p-n}{p-1}-1}\rho\frac{|y^k-x_1|^{\frac{p-n}{p-1}-1}}{|Du_1(y^k)|}.
\end{align*}
We can now employ the second limit in Proposition \ref{LimitProp} and \eqref{yjlim} to obtain 
$$
\lim_{k\rightarrow \infty}
\frac{|Du_\rho(z^k)|}{|z^k-x_\rho|^{\frac{p-n}{p-1}-1}}=\left(\frac{\lambda_p}{n\omega_n}\right)^\frac{1}{p-1}.
$$
And since $z^k$ was arbitrary,  
\begin{equation}\label{urhoLimit2}
\lim_{z\rightarrow x_\rho}
\frac{|Du_\rho(z)|}{|z-x_\rho|^{\frac{p-n}{p-1}-1}}=\left(\frac{\lambda_p}{n\omega_n}\right)^\frac{1}{p-1}.
\end{equation}

\par 5. Using the upper bound \eqref{UpperLiminf} and the limits \eqref{urhoLimit1} and \eqref{urhoLimit2}, we can 
proceed with the same arguments as in the proof of Proposition \ref{LimitProp} to conclude
\begin{align*}
\int_{\Omega}|Du_\rho|^pdz&\le \liminf_{r\rightarrow 0^+}\int_{\partial B_r(x_\rho)}|Du_\rho|^{p-1}d\sigma= \lambda_p.
\end{align*}
\end{proof}
In order to verify \eqref{CrucialEquality}, we will employ the respective support functions $h_0$, $h_1$, and $h_\rho$ of $u_0$, $u_1$ and $u_\rho$; recall the support function of an extremal was defined in \eqref{SuppExtreme}. In particular, we note that the identity \eqref{SuperLevelSetIdentity} implies 
$$
h_\rho=(1-\rho)h_0+\rho h_1. 
$$
Using this identity with the fact that $h_0, h_1$ and $h_\rho$ all satisfy equation \eqref{plaph}, Colesanti and Salani showed for each $t\in (0,1)$ there is $C(t)>0$ such that
\begin{equation}\label{HesshmatrixId}
D^2_\xi h_0(\xi,t)|_{\xi^\perp}=C(t)D^2_\xi h_1(\xi,t)|_{\xi^\perp}
\end{equation}
and
\begin{equation}\label{DthId}
\partial_th_0(\xi,t)=C(t)\partial_th_1(\xi,t)
\end{equation}
for all $|\xi|=1$ (see the proof of Theorem 1 in \cite{ColSal}).

\par As $D^2_\xi h_0(\xi,t)\xi=D^2_\xi h_0(\xi,t)\xi=0\in \R^n$, it follows from \eqref{HesshmatrixId} and the homogeneity of $h_0$ and $h_1$ that 
$$
D^2_\xi h_0(\xi,t)=C(t)D^2_\xi h_1(\xi,t)
$$
for $(\xi,t)\in (\R^n\setminus\{0\})\times(0,1)$. Upon integration we find 
$$
h_0(\xi,t)=C(t)h_1(\xi,t)+a(t)\cdot \xi +b(t)
$$
for some $a(t)\in \R^n$ and $b(t)\in \R$. Since $h_0$ and $h_1$ are homogeneous of degree one, it must be that $b(t)=0$ for each $t\in (0,1)$. Thus,  
$$
h_0(\xi,t)=C(t)h_1(\xi,t)+a(t)\cdot \xi.
$$
Taking the time derivative of both sides of this equation gives
$$
\partial_th_0(\xi,t)=C(t)\partial_th_1(\xi,t)+C'(t)h_1(\xi,t)+a'(t)\cdot \xi.
$$
Comparing with \eqref{DthId} leads us to
$$
C'(t)h_1(\xi,t)+a'(t)\cdot \xi=0.
$$

\par Suppose that $C'(t_0)\neq 0$ for some $t_0\in (0,1)$. Then 
$$
h_1(\xi,t_0)=\left[\frac{-a'(t_0)}{C'(t_0)}\right]\cdot \xi.
$$
This would imply the level set $\{u_1=t_0\}$ is the singleton $\{-a'(t_0)/C'(t_0)\}$, which is not possible. Therefore, $C'(t)= 0$ for $t\in (0,1)$ and thus $a'(t)\cdot \xi=0$ for all $\xi$. Consequently, $a'(t)=0$ for $t\in (0,1)$. Since $h_0$ and $h_1$ coincide at $t=0$, $C(t)=1$ and $a(t)=0$ for all $t\in [0,1]$. As a result, $h_0\equiv h_1$ and so $\{u_0\ge t\}=\{u_1\ge t\}$ for each $t\in [0,1]$ (Theorem 8.24 in \cite{RocWet}). This verifies \eqref{CrucialEquality}.

\begin{proof}[~Proof of Corollary \ref{Greenthm}] By equation \eqref{GreenPDE} and inequality \eqref{SecondMorrey},
\begin{align*}
G(y,y)&=\int_\Omega |DG(x,y)|^pdx\\
&\ge\lambda_p\|G(\cdot,y)\|^p_\infty\\
&\ge\lambda_pG(y,y)^p.
\end{align*}
Therefore, $G(y,y)\le\lambda_p^{-1/(p-1)}$ and equality holds if and only if $G(\cdot,y)$ is extremal.  Theorem \ref{mainThm} in turn implies that equality occurs if and only if $y=x_0$. 
\end{proof}

\section{Nonuniqueness}\label{CounterEx}
We will now explain that uniqueness does not hold for general domains by providing a few explicit examples.  These instances include planar  annuli, bow tie and dumbbell shaped domains.  The perceptive reader will also see how to construct other examples from our remarks below. 
\begin{figure}[h]
\begin{center}
\includegraphics[width=.5\textwidth]{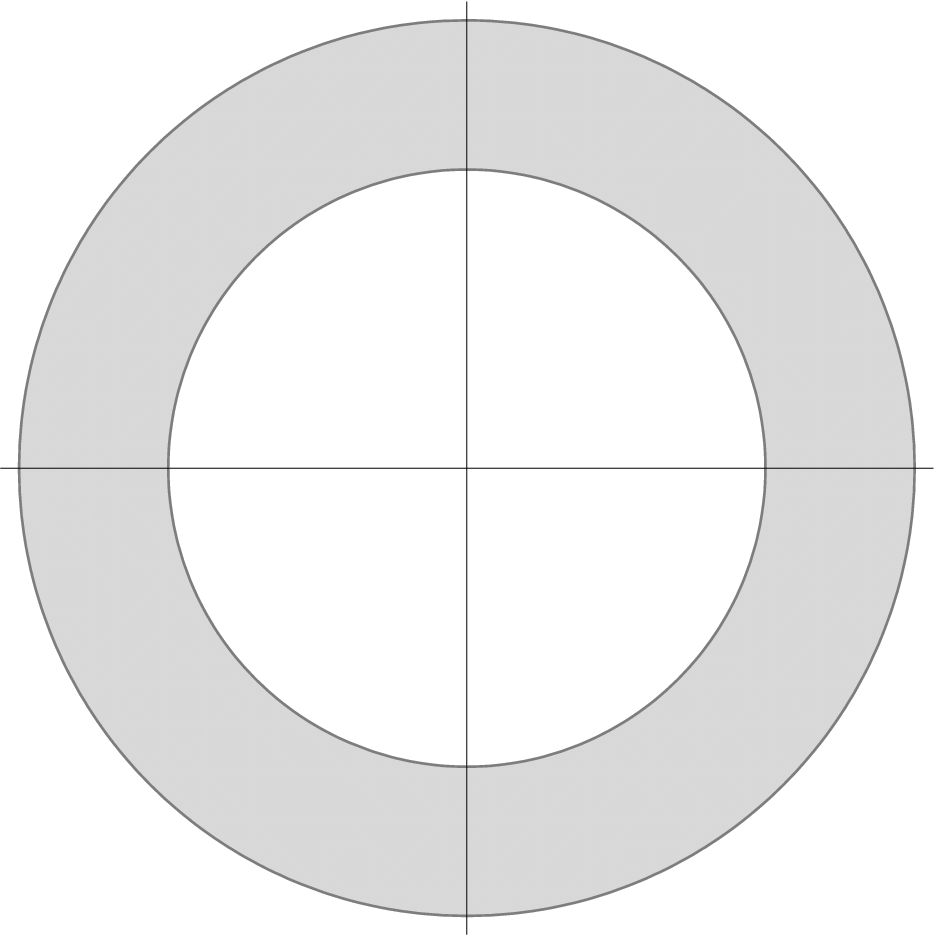}
\caption{$\Omega_{4,6}$ when $n=2$.}
\label{AnnFig}
\end{center}
\end{figure}
\begin{ex}
Define 
$$
\Omega_{r_1,r_2}:=\{x\in \R^n: r_1<|x|<r_2\}
$$
for $r_1,r_2>0$ with $r_1<r_2$.  
As mentioned above, there is a positive extremal $u$ that achieves is maximum at a single point $x_0\in \Omega_{r_1,r_2}$.  Notice that for any $n\times n$ orthogonal matrix $O$,  $v:=u\circ O$ is a positive extremal and $\|v\|_\infty=\|u\|_\infty$. Consequently, for each $y_0\in \Omega_{r_1,r_2}$ with $|y_0|=|x_0|$, there is a distinct positive extremal with supremum norm equal to $\|u\|_\infty$. Thus, uniqueness of extremals does not hold for annuli as showed in Figure \ref{AnnFig}.  
\end{ex}

\begin{ex}\label{BowTieEx} Consider the ``bow tie" domain in the plane 
$$
\Omega_\epsilon:=\left\{(x_1,x_2)\in \R^2: |x_2|<|x_1|+\epsilon, |x_1|<1\right\}
$$  
for $\epsilon>0$. Note, in particular, that $\Omega_\epsilon$ is star-shaped with respect to the origin; see Figure \ref{BowtieFig}. 
\begin{figure}[h]
\begin{center}
\includegraphics[width=.5\textwidth]{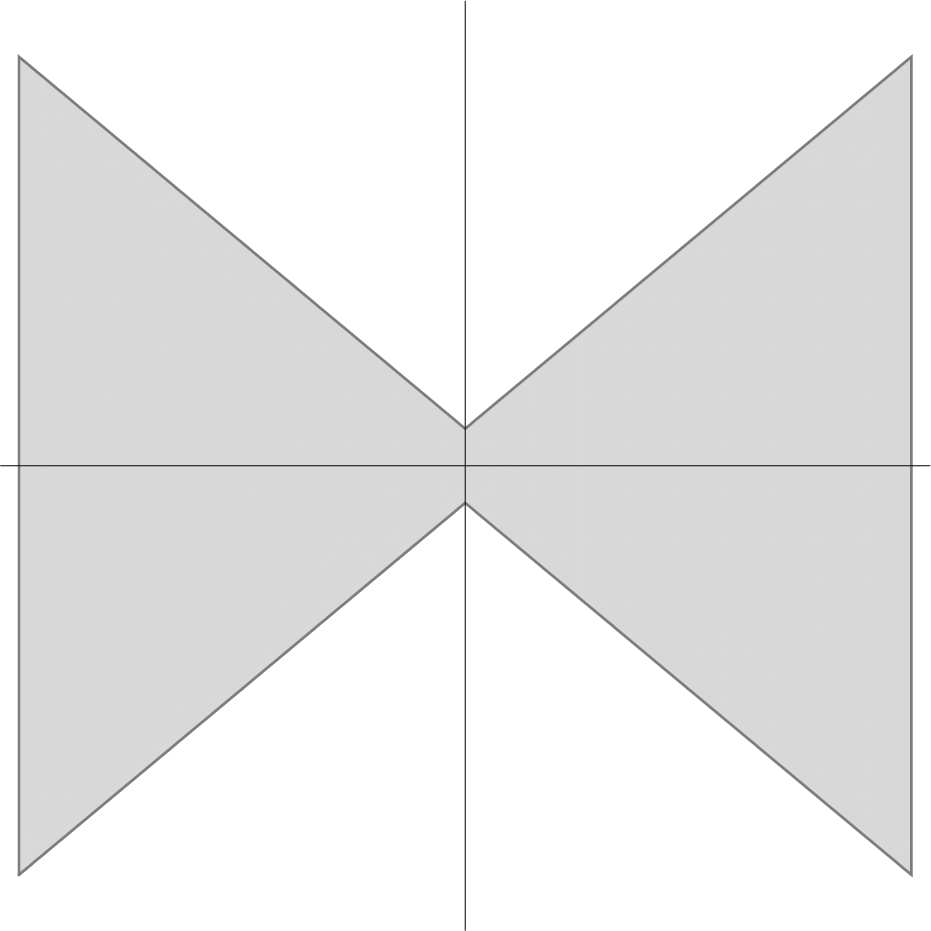}
\caption{$\Omega_{1/10}$}
\label{BowtieFig}
\end{center}
\end{figure}
Let $u_\epsilon$ be a positive extremal for $\Omega_\epsilon$ with $\|u_\epsilon\|_\infty=1$. If $u_\epsilon$ is unique, then it must be that 
\begin{equation}\label{uisoneatZEro}
u_\epsilon(0,0)=1.
\end{equation}
This is due to the fact that the $\Omega_\epsilon$ and the $p$-Laplacian are invariant with respect to reflection about the $x_1$ and $x_2$ axes. 

\par Let us assume \eqref{uisoneatZEro} holds for each $\epsilon>0$ and extend $u_\epsilon$ to be 0 outside of $\Omega_\epsilon$. Notice that the resulting function, which we also denote as $u_\epsilon$, belongs to $W^{1,p}(\R^2)$. Also note that since $\Omega_0\subset \Omega_\epsilon$ 
$$
\int_{\R^2}|Du_\epsilon|^pdx=\int_{\Omega_\epsilon}|Du_\epsilon|^pdx=\lambda_p(\Omega_\epsilon)\le \lambda_p(\Omega_0).
$$
Consequently, there is a decreasing sequence of positive numbers $(\epsilon_j)_{j\in \N}$ tending to 0 and a continuous function $u_0:\R^2\rightarrow [0,1]$ for which $u_{\epsilon_j} \rightarrow u_0$ locally uniformly on $\R^2$. In view of \eqref{uisoneatZEro}, 
$$
u_0(0,0)=1.
$$
On the other hand, $u_\epsilon(0,2\epsilon)=0$ for all $\epsilon>0$. Thus 
$$
u_0(0,0)=\lim_{j\rightarrow\infty}u_{\epsilon_j}(0,2\epsilon_j)=0,
$$
which is a contradiction. 
\par As a result, we conclude that there is some $\epsilon>0$ such that $u_\epsilon$ does not achieve its maximum value at $(0,0)$. For this value of $\epsilon$, $\Omega_\epsilon$ will have a least two  positive extremals with supremum norm equal to 1. 
\end{ex}
\begin{ex}
The same ideas used in Example \ref{BowTieEx}, can be used to show the dumbbell-shaped domain
$$
B_1(-5,0)\cup \left([-5,5]\times[-\delta,\delta]\right)\cup B_1(5,0)\quad (0<\delta<1)
$$
does not have unique extremals for some $\delta>0$ chosen small enough. See Figure \ref{DumbellFig}.  
\begin{figure}[h]
\begin{center}
\includegraphics[width=.8\textwidth]{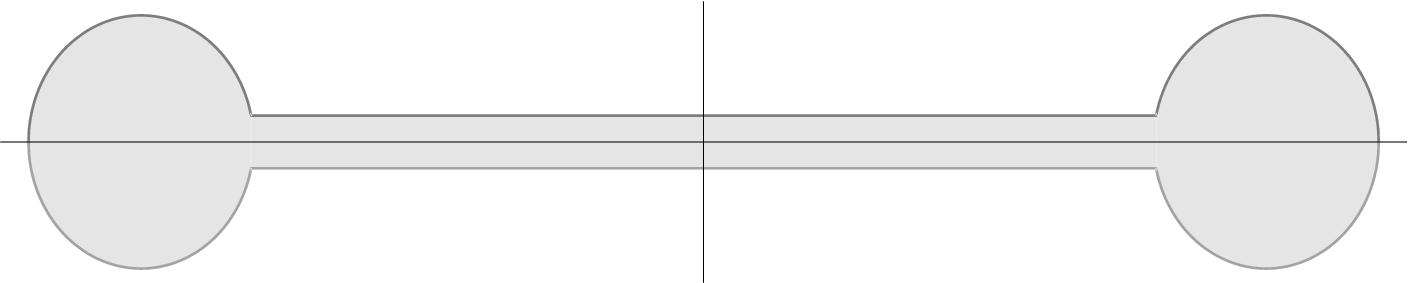}
\caption{Dumbbell-shaped domain for $\delta=1/5$.}
\label{DumbellFig}
\end{center}
\end{figure}
\end{ex}

\section{Steiner symmetric domains}\label{Steiner}
Theorem \ref{mainThm} implies that if a convex domain has some reflectional symmetry, then we have additional information on the location of the maximum points of positive extremals.  More precisely, we can make the following observation. 

\begin{cor}
Assume $\Omega\subset\R^n$ is a convex domain that is invariant with respect to reflection across the hyperplanes $\{x\in \R^n: x_j=0\}$ for $j=1,\dots, n$.  Then any positive (negative) extremal achieves 
its maximum (minimum) value at $0\in \R^n$. 
\end{cor}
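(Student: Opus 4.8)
The plan is to combine the uniqueness statement of Theorem~\ref{mainThm} with the symmetry of the domain. First I would record that the origin lies in $\Omega$: since $\Omega$ is nonempty, convex, open, and invariant under each reflection $R_j\colon x\mapsto x-2x_je_j$ across $\{x_j=0\}$, starting from any $x\in\Omega$ all $2^n$ points obtained by flipping the signs of the coordinates of $x$ belong to $\Omega$, and $0$ is their arithmetic mean, hence $0\in\Omega$ by convexity.

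Next I would show that if $u$ is a positive extremal with $\|u\|_\infty=1$, then $u\circ R_j$ is again a positive extremal with $\|u\circ R_j\|_\infty=1$ for each $j$. Indeed, $R_j$ is an orthogonal transformation with $R_j(\Omega)=\Omega$, so $x\mapsto R_jx$ maps $W^{1,p}_0(\Omega)$ onto itself, preserves $\|\cdot\|_\infty$, and preserves the Dirichlet energy $\int_\Omega|Du|^p\,dx$ since $|D(u\circ R_j)|=|Du|\circ R_j$ and $|\det R_j|=1$. Thus $u\circ R_j$ realizes the same ratio $\lambda_p$ and is therefore extremal, and it is positive because $u$ is. By Theorem~\ref{mainThm}, $u\circ R_j=c_j u$ for some constant $c_j>0$; comparing supremum norms forces $c_j=1$, so $u$ is invariant under each $R_j$.

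Finally I would invoke the Corollary proved earlier, namely that $|u|$ attains its maximum value at a unique point $x_0\in\Omega$. Since $u(R_jx)=u(x)$ for all $x$, the point $R_jx_0$ also maximizes $u$, so uniqueness gives $R_jx_0=x_0$, i.e.\ the $j$-th coordinate of $x_0$ vanishes; as this holds for every $j=1,\dots,n$, we get $x_0=0$. For a negative extremal $u$, one applies the argument to the positive extremal $-u$.

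I do not expect a serious obstacle here: the only computational content is the invariance of the energy, the supremum norm, and the space $W^{1,p}_0(\Omega)$ under the orthogonal maps $R_j$, all of which are routine; the essential input is the uniqueness furnished by Theorem~\ref{mainThm} together with the uniqueness of the maximum point of an extremal.
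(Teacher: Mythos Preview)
Your proof is correct and follows essentially the same route as the paper: use the reflectional symmetry to produce another positive extremal with the same supremum norm, invoke Theorem~\ref{mainThm} to conclude it coincides with $u$, and then use the uniqueness of the maximum point to force each coordinate of $x_0$ to vanish. The only differences are cosmetic (you explicitly verify $0\in\Omega$ and phrase the last step via $R_jx_0=x_0$ rather than via the maximum point of the reflected function), so there is nothing to add.
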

\begin{proof}
Assume $u\in W^{1,p}_0(\Omega)$ is a positive extremal that achieves it maximum value at $z$.  As $\Omega$ is invariant with respect to $\{x\in \R^n: x_1=0\}$, the function
$$
u_1(x_1,x_2,\dots, x_n):=u(-x_1,x_2,\dots, x_n),\quad (x_1,x_2,\dots, x_n)\in \Omega
$$
belongs to $W^{1,p}_0(\Omega)$ and $\|u\|_\infty=\|u_1\|_\infty$. Moreover, it is routine to verify that $u_1$ is also a positive extremal that achieves it maximum at the reflection of $z$ about the plane $\{x\in \R^n: x_1=0\}$.  By Theorem \ref{mainThm}, $u_1=u$ which forces $z\in \{x\in \R^n: x_1=0\}$.   Repeating this argument for $j=2,\dots, n$, we find $z\in \{x\in \R^n: x_j=0\}$ for $j=1,\dots, n$. As a result, $z=0$.
\end{proof}
We now seek to extend this observation. We will show below that certain symmetric two dimensional domains have unique extremals without assuming the domains were convex to begin with. To this end, we employ Steiner symmetrization. In particular, we will make use of the results by Cianchi and Fusco in \cite{CianchiFusco} on the equality condition in the P\'olya-Szeg\"o inequality associated with  Steiner symmetrization. We also use special properties of the critical points of $p$-harmonic functions in two dimensions due to Manfredi in \cite{Man88}. 

\par Let us first briefly recall the notion of the Steiner symmetrization of a subset of $\R^2$. For a given $A\subset\R^2$ and $a\in \R$, we will denote $A\cap \{x_1=a\}$ as the intersection of $A$ with the vertical line $x_1=a$.  We also will write ${\cal L}^m$ for the outer Lebesgue measure defined on all subsets of $\R^m$ $(m=1,2)$. 
\begin{defn}
Assume $A\subset \R^2$. The {\it Steiner symmetrization} of $A$ with respect to the $x_1$ axis is 
$$
A^{*}_1=\left\{(a,b)\in \R^2: |b|< \frac{1}{2} {\cal L}^1\left(A\cap \{x_1=a\}\right)\right\}.
$$ 
$A$ is said to be {\it Steiner symmetric} with respect to the $x_1$ axis if $A^*_1=A$. 
\end{defn} 

Now suppose $u:\R^2\rightarrow [0,\infty)$ is Lebesgue measurable. We can use the above definition to provide the following rearrangement of $u$ 
$$
u^*_1(x):=\int^\infty_0\chi_{\{u>t\}^{*}_{1}}(x)dt, \quad x\in \R^2.
$$
This function is called the {\it Steiner rearrangement} of $u$ with respect to the $x_1$ axis. Observe that 
\begin{equation}\label{EquiMeasProp}
\{u^*_1>t\}=\{u>t\}^*_1
\end{equation}
for each $t\ge 0$. Note also that $u^*_1(x_1,\cdot)$ and $u(x_1,\cdot)$ have the same distribution for ${\cal L}^1$ almost every $x_1\in \R$.  

\par It is known that if $p\in [1,\infty)$, $\Omega\subset\R^2$ is a bounded domain and $u\in W^{1,p}_0(\Omega)$, then $u^*_1\in W^{1,p}_0(\Omega)$. Moreover, if $\Omega^*_1$ is 
Lebesgue measurable, the {\it P\'olya-Szeg\"{o} inequality}
\begin{equation}\label{PSineq}
\int_{\Omega^*_1} |D u_1^*|^p dx \leq \int_\Omega |D u|^p dx 
\end{equation}
holds, see \cite{Brock, CianchiFusco}. Cianchi and Fusco showed that if $\Omega^*_1=\Omega$ and equality holds
in \eqref{PSineq}, then $u^*_1=u$ provided 
\begin{equation}\label{CFcondition}
{\cal L}^2\left(\{u_{x_2}=0\}\right)=0
\end{equation}
(Theorem 2.2 in \cite{CianchiFusco}).  All of the above definitions and facts regarding Steiner symmetrization and rearrangements with respect to the $x_1$ axis have obvious counterparts with respect to the $x_2$ axis. 

\par Our main assertion regarding the uniqueness of extremals on Steiner symmetric domains is as follows.  
\begin{prop}\label{2DSimple} Assume $\Omega\subset \R^2$ is a bounded domain that is equal to its Steiner symmetrization with respect to the $x_1$ and $x_2$ axes. Then any positive (negative) extremal achieves 
its maximum (minimum) value at $0\in \R^2$.
\end{prop}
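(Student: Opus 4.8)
\noindent\emph{Proof plan.} The strategy is to show that the extremal $u$ coincides with its Steiner rearrangement about each coordinate axis, and then to locate its unique maximum point using this symmetry. Normalize so that $u>0$, $\|u\|_\infty=1$, and $u$ attains its maximum at the unique interior point $x_0\in\Omega$ guaranteed by Section~\ref{Prelim}; thus $\int_\Omega|Du|^p\,dx=\lambda_p$. First I would form the Steiner rearrangement $u^*_1$ about the $x_1$ axis. Since $\Omega^*_1=\Omega$, we have $u^*_1\in W^{1,p}_0(\Omega)$, and by equimeasurability (see \eqref{EquiMeasProp}) $\|u^*_1\|_\infty=\|u\|_\infty=1$. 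The P\'olya--Szeg\"o inequality \eqref{PSineq} then gives $\int_\Omega|Du^*_1|^p\,dx\le\lambda_p$, while the definition of $\lambda_p$ forces the reverse inequality; hence equality holds in \eqref{PSineq} and $u^*_1$ is itself an extremal. By the corollary in Section~\ref{Prelim}, $u^*_1$ is then a weak solution of \eqref{pGroundStateMeasure} and is $p$-harmonic away from its own unique maximum point.

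The heart of the argument --- and the step I expect to be the main obstacle --- is checking hypothesis \eqref{CFcondition}, so that the Cianchi--Fusco equality statement (Theorem~2.2 of \cite{CianchiFusco}) applies and yields $u^*_1=u$. For an extremal $w$ (either $u$ or $u^*_1$), $w$ is $p$-harmonic in $\Omega\setminus\{x_w\}$ with $x_w$ its unique maximizer; by Manfredi's analysis of the critical points of planar $p$-harmonic functions \cite{Man88}, the critical set $\{Dw=0\}$ is discrete in $\Omega\setminus\{x_w\}$, and deleting it together with $x_w$ leaves a connected open set $V$ on which $w$ is real analytic. On $V$, either $w_{x_2}$ is not identically zero, in which case its zero set --- being the zero set of a nonzero real analytic function --- is $\mathcal{L}^2$-null and $\mathcal{L}^2(\{w_{x_2}=0\})=0$; or $w_{x_2}\equiv 0$ on $V$, in which case $w$ is locally a function of $x_1$ alone and $-\Delta_p w=0$ collapses to $\partial_{x_1}\!\bigl(|w_{x_1}|^{p-2}w_{x_1}\bigr)=0$, forcing $w$ to be locally affine, hence affine on $V$ by analyticity and connectedness, hence affine on $\Omega$ by continuity. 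The last possibility is impossible, since a nonconstant affine function cannot vanish on the boundary of a bounded planar domain and a constant one cannot be a positive extremal. Thus \eqref{CFcondition} holds, $u^*_1=u$, and the identical argument with the two axes interchanged gives $u^*_2=u$.

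Finally I would read off the position of $x_0$. Because $u=u^*_1$ and both sides are continuous, every superlevel set $\{u>t\}$ is Steiner symmetric about the $x_1$ axis, so for each fixed $a\in\R$ the slice $x_2\mapsto u(a,x_2)$ is even and nonincreasing in $|x_2|$. Writing $x_0=(x_0^1,x_0^2)$, this gives $1=u(x_0^1,x_0^2)\le u(x_0^1,0)\le 1$, so that $u(x_0^1,0)=\|u\|_\infty$; the uniqueness of the maximizer from Section~\ref{Prelim} then forces $(x_0^1,0)=x_0$, i.e.\ $x_0^2=0$. Running the same argument with $u=u^*_2$ gives $x_0^1=0$, so $x_0=0\in\Omega$. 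For a negative extremal $v$, the statement follows by applying the above to the positive extremal $-v$.
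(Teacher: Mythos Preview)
Your proposal is correct and follows essentially the same route as the paper: Steiner rearrange, use P\'olya--Szeg\"o to get equality, invoke Manfredi to make the critical set discrete and hence $u$ real analytic off a null set, and rule out $u_{x_2}\equiv 0$ to trigger the Cianchi--Fusco equality case. The only cosmetic differences are that the paper dismisses $u_{x_2}\equiv 0$ more directly (the map $t\mapsto u(z+te_2)$ goes from $1$ to $0$, so $u_{x_2}$ cannot vanish identically) rather than via your affine-function argument, and it reads off $z=0$ from the even symmetry $u(x_1,x_2)=u(x_1,-x_2)$ together with uniqueness of the maximizer rather than from slice monotonicity.
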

\begin{proof} 
Assume $u\in W^{1,p}_0(\Omega)$ is a positive extremal with $\|u\|_\infty=u(z)=1$. In view of \eqref{EquiMeasProp}, $\|u^*_1\|_\infty=1$, as well. By the P\'olya-Szeg\"o inequality \eqref{PSineq},
we easily conclude  $u^*_1$ is extremal and $\int_\Omega |D u_1^*|^p dx = \int_\Omega |D u|^p dx$.  We now claim that $u$ satisfies \eqref{CFcondition}. Once we verify this assertion, we would have $u=u^*_1$ which implies $u(x_1,x_2)=u(x_1,-x_2)$ for all $(x_1,x_2)\in \Omega$. As a result $z$ belongs to the $x_1$ axis, and very similarly we would have that $z$ also belongs to the $x_2$ axis. Therefore, $z=0\in \R^2$. 

\par  Let us now show that any positive extremal $u\in W^{1,p}_0(\Omega)$ satisfies \eqref{CFcondition}. Recall that $u$ is $p$-harmonic in $\Omega\setminus\{z\}$ and therefore, $u\in C^1_{\text{loc}}(\Omega\setminus\{z\})$. By the results of Manfredi in \cite{Man88}, we know the zeros of $Du$ are isolated in $\Omega\setminus\{z\}$. Consequently, $u$ is locally real analytic in $S:=\Omega\setminus\left(\{z\}\cup\{|Du|=0\}\right)$, which is an open set of full measure. In particular, $u_{x_2}$ is also locally real analytic in $S$.  Therefore, if  
$$
{\cal L}^2\left(\{x\in S : u_{x_2}(x)=0\}\right)>0,
$$
then it must be that $u_{x_2}\equiv 0$ in $S$; see section 3.1 of \cite{KraPar}. Since $u_{x_2}$ is continuous in $\Omega\setminus\{z\}$, it would then follow that $u_{x_2}\equiv 0$ in $\Omega\setminus\{z\}$, as well. However, this is clearly not possible as the function 
$$
[0,\infty) \ni t\mapsto u(z+te_2)
$$
is positive at $t=0$ and vanishes for all $t>0$ sufficiently large.  As a result, \eqref{CFcondition} holds and the assertion follows.  
\end{proof}
\begin{figure}[h]
\begin{center}
\includegraphics[width=.75\textwidth]{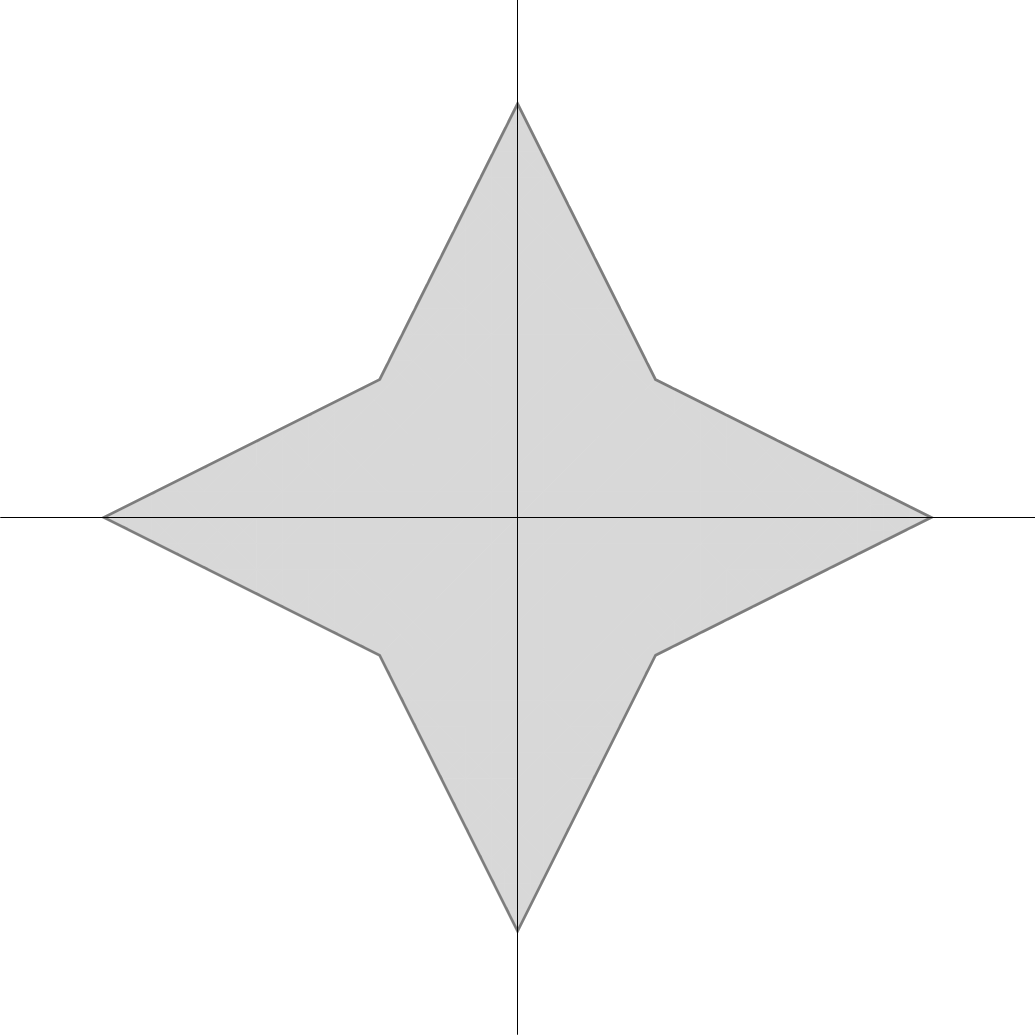}
\caption{}
\label{StarFish}
\end{center}
\end{figure}
\begin{figure}[h]
\begin{center}
\includegraphics[width=.7\textwidth]{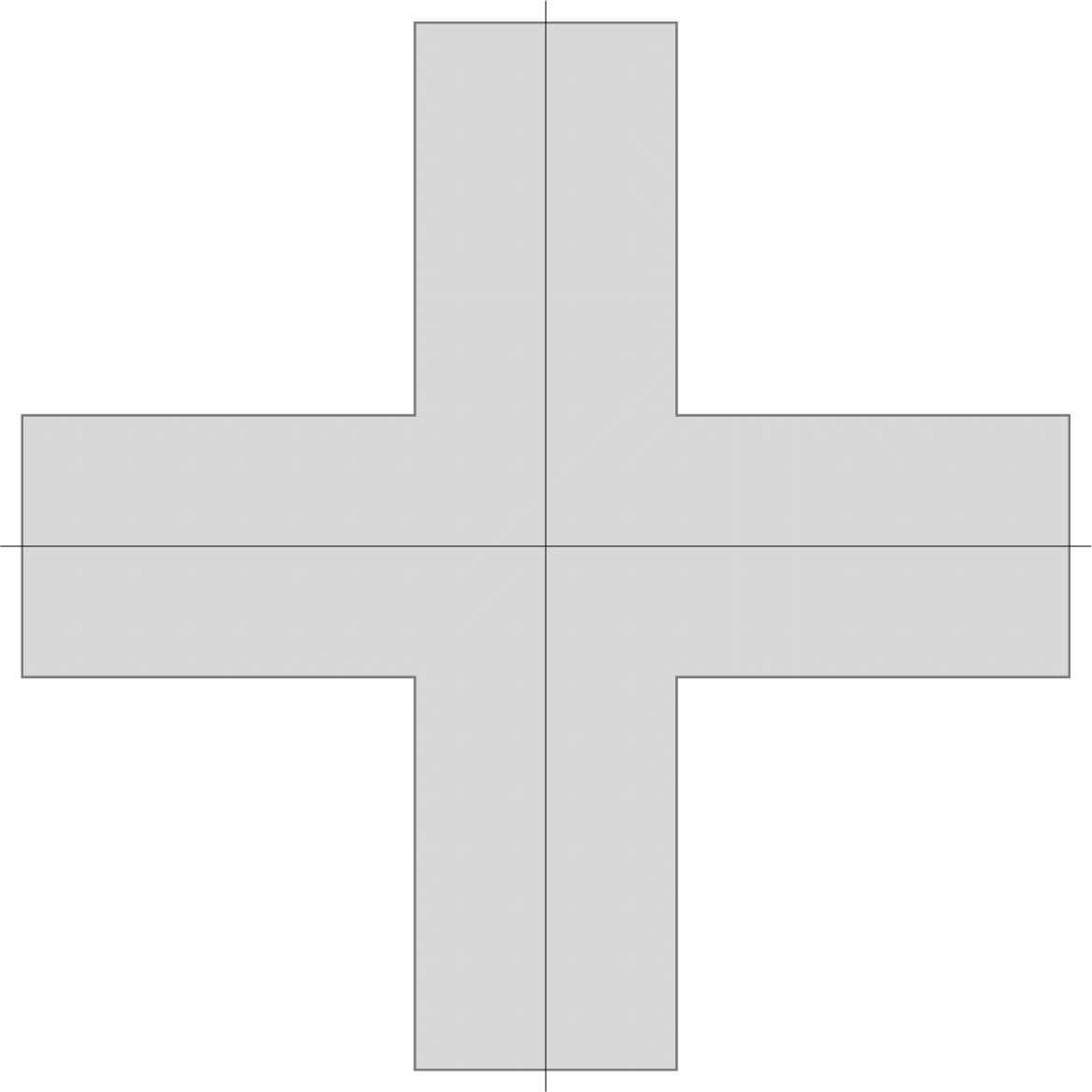}
\caption{}
\label{CrossFig}
\end{center}
\end{figure}
\begin{figure}[h]
\begin{center}
\includegraphics[width=.7\textwidth]{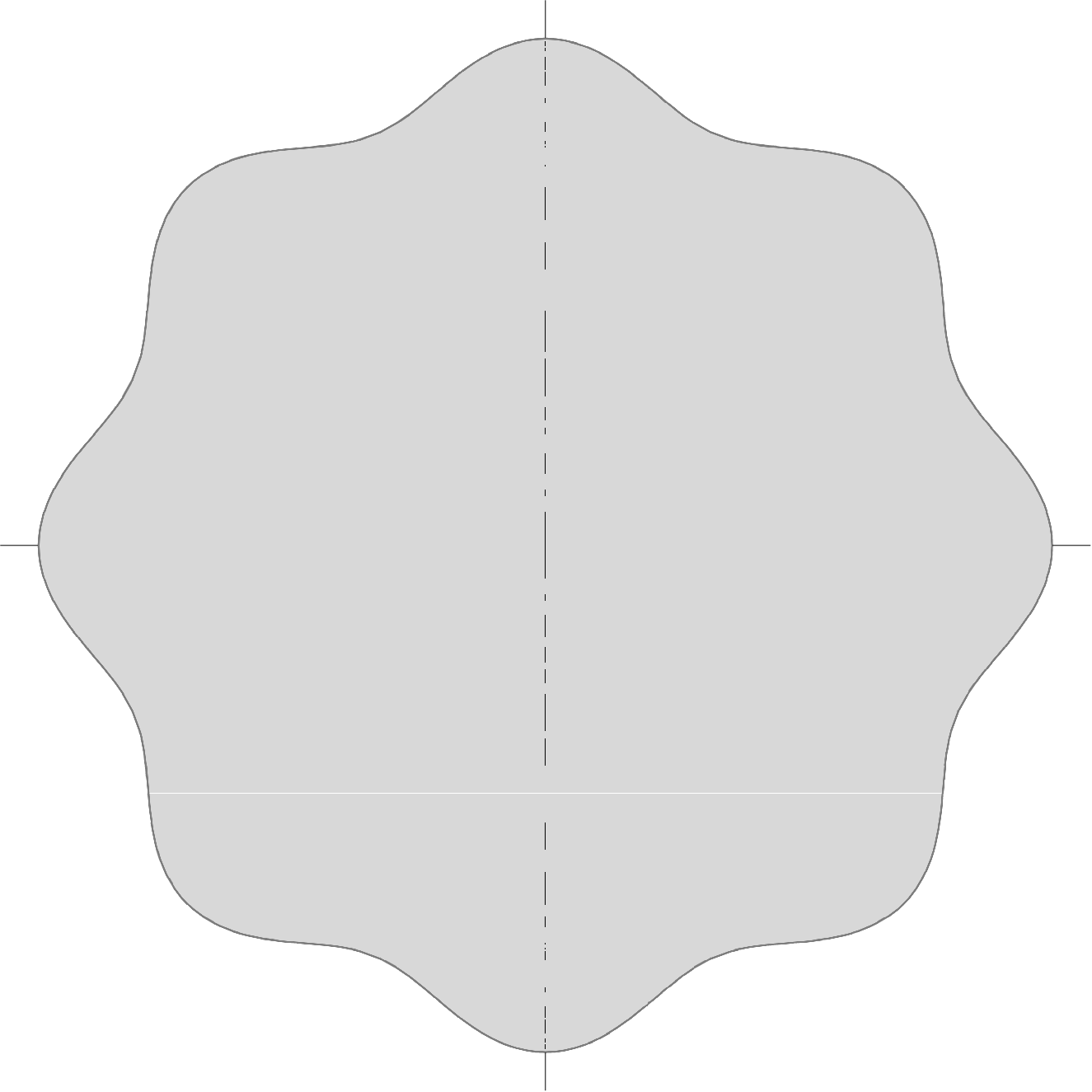}
\caption{}
\label{WobbleFig}
\end{center}
\end{figure}
See Figures \ref{StarFish}, \ref{CrossFig} and \ref{WobbleFig} for Steiner symmetric, nonconvex domains $\Omega$ for which Proposition \ref{2DSimple} applies to.  Figure \ref{StarFish} displays 
$$
\{(x_1,x_2)\in \R^2: |x_1|+2|x_2|<1, 2|x_1|+|x_2|<1 \},
$$
Figure \ref{CrossFig} shows 
$$
([-3,3]\times[-1,1])\cup([-1,1]\times[-3,3])
$$
and Figure \ref{WobbleFig} exhibits the region bounded by the curve
$$
r=10+\frac{13}{20}\cos(8\theta)
$$
given in polar coordinates.


\end{document}